\newtheorem{theorem}{Theorem}[section]
\newtheorem{proposition}[theorem]{Proposition}
\newtheorem{corollary}[theorem]{Corollary}
\newtheorem{conjecture}[theorem]{Conjecture}
\theoremstyle{definition}
\newtheorem{definition}[theorem]{Definition}
\newtheorem{example}[theorem]{Example}
\theoremstyle{remark}
\newtheorem{remark}[theorem]{Remark}
\numberwithin{equation}{section}
\newcommand{\R}{\mathbb{ R}}
\newcommand{\RR}{\mathcal{ R}}
\newcommand{\DD}{\mathcal{ D}}
\def\bC{{\mathbb C}}
\def\bR{{\mathbb R}}
\def\bH{{\mathbb H}}
\begin{document}

\title{Contact pairs and locally conformally symplectic structures}
\author{G.~Bande}
\address{Dipartimento di Matematica e Informatica, Universit\`a degli Studi di Cagliari, Via Ospedale
72, 09124 Cagliari, Italy}
\email{gbande{\char'100}unica.it}
\author{D.~Kotschick}
\address{Mathematisches Institut, {\smaller LMU} M\"unchen,
Theresienstr.~39, 80333~M\"unchen, Germany}
\email{dieter{\char'100}member.ams.org}


\thanks{The first author was supported by the project \textit{Start-up giovani ricercatori} of the Universit\`a degli Studi di Cagliari. 
This work was begun while the second author was a Visiting Professor at the Universit\`a degli Studi di Cagliari in 2008}
\date{1 June 2010;  \copyright{\ G.~Bande and D.~Kotschick 2008--2010}}
\subjclass[2000]{Primary 53C25, 53C55, 57R17; Secondary 53C12, 53C15, 58A17}



\begin{abstract}
We discuss a correspondence between certain contact pairs on the one hand, and certain locally conformally symplectic forms on the other. 
In particular, we characterize these structures through suspensions of contactomorphisms. If the contact pair is endowed with a normal metric, then the 
corresponding lcs form is locally conformally K\"ahler, and, in fact, Vaisman. This leads to classification results for normal 
metric contact pairs. In complex dimension two we obtain a new proof of Belgun's classification of Vaisman manifolds under
the additional assumption that the Kodaira dimension is non-negative. We also  produce many examples of manifolds admitting 
locally conformally symplectic structures but no locally conformally K\"ahler ones.
\end{abstract}

\maketitle

\section{Introduction}

The notion of a {\it contact pair} was introduced in~\cite{Bande1,BH1}, but later turned out to be the same as the so-called bicontact 
structures considered long ago by Blair, Ludden and Yano~\cite{BLY} in the context of Hermitian geometry. A contact pair of type 
$(h,k)$ on a $(2h+2k+2)$-dimensional manifold is a pair of one-forms $(\alpha, \beta)$, such that 
$\alpha\land(d\alpha)^{h}\land\beta\land (d\beta)^{k}$ is a volume form, $(d\alpha)^{h+1}=0$ and $(d\beta)^{k+1}=0$. To such a pair 
are associated two Reeb vector fields $A$ and $B$, uniquely determined by the following  conditions: $\alpha(A)=\beta(B)=1$,
$\alpha(B)=\beta(A)=0$ and $i_{A}d\alpha=i_{A}d\beta=i_{B}d\alpha=i_{B}d\beta=0$.

{\it Locally conformally symplectic} or lcs forms were introduced by Lee~\cite{Lee} and Vaisman~\cite{V0,lcs}. They are non-degenerate 
two-forms $\omega$ for which there exists a closed one-form $\theta$, called the Lee form, satisfying $d\omega=\omega \land \theta$.

It turns out that a contact pair $(\alpha, \beta)$ of type $(h,0)$ gives rise to the lcs form $d\alpha+\alpha \land \beta$. In this paper we provide a necessary 
and sufficient condition for an lcs form to arise from a contact pair in this way. More generally, we show that a {\it generalized contact pair} 
of type $(h,0)$, which is a particular {\it contact-contact structure} in the sense of \cite{BGK}, gives rise to a lcs form. We prove that closed 
manifolds carrying a generalized contact pair of type $(h,0)$ are completely characterized by the fact that they fiber over the circle with fiber 
a contact manifold and the monodromy acting by a contactomorphism.

We also consider normal metric contact pairs of type $(h,0)$ introduced in~\cite{BH2,BH3}. These are contact pairs endowed with two complex 
structures $J$ and $T$ which coincide on the intersection of the kernels of the one-forms $\alpha$, $\beta$ and such that $JA=B=-TA$, together 
with a metric $g$ compatible with both complex structures, whose fundamental forms are $d\alpha \pm \alpha \land \beta$. In particular $J$ and $T$ give opposite orientations. 

We prove the equivalence between normal metric contact pairs and non-K\"ahler Vaisman structures and we give several applications related to lcK geometry. 
In particular we give examples of manifolds carrying an lcs form but no lcK structure and we classify compact complex surfaces with non-negative Kodaira dimension 
carrying a Vaisman structure. The classification of Vaisman complex surfaces (for any Kodaira dimension) has been obtained by Belgun~\cite{Bel} using different methods. 
Our proof relies on older results from~\cite{Ko} about complex surfaces admitting a complex structure for both orientations, and the results of Wall~\cite{W1,W2} on complex 
surfaces admitting a geometry in the sense of Thurston.

\section{Definitions and background}\label{s:back}

\subsection{Contact pairs}\label{ss:cp}

Contact pairs were considered in~\cite{Bande1,BH1}. We refer the reader to those papers and to~\cite{BGK} for the basic properties.
Here we only recall the definition of the structure, and of the associated Reeb vector fields.

\begin{definition}[\cite{Bande1,BH1}]\label{d:cpair}
A pair $(\alpha, \beta)$ of $1$-forms on a manifold is said to be a contact pair 
of type $(h,k)$ if the following conditions are satisfied: $\alpha\land 
(d\alpha)^{h}\land\beta\land (d\beta)^{k}$ is a volume form, $(d\alpha)^{h+1}=0$
and $(d\beta)^{k+1}=0$.
\end{definition}
The forms $\alpha$ and $\beta$ have constant class $2h+1$ and $2k+1$ respectively, 
and the leaves of their characteristic foliations have induced contact structures.

\begin{proposition}[\cite{Bande1,BH1}]\label{p:Reeb}
For a contact pair $(\alpha,\beta)$ there exist two commuting vector fields $A$, 
$B$ uniquely determined by the following  conditions: $\alpha(A)=\beta(B)=1$,
$\alpha(B)=\beta(A)=0$ and $i_{A}d\alpha=i_{A}d\beta=i_{B}d\alpha=i_{B}d\beta=0$.
\end{proposition}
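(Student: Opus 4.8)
The plan is to reduce everything to pointwise linear algebra, supplemented by a single application of Cartan's calculus for the bracket. First I would note that the volume condition forces $d\alpha$ and $d\beta$ to have constant rank: since $\alpha\land(d\alpha)^{h}\land\beta\land(d\beta)^{k}$ is nowhere zero we have $(d\alpha)^{h}\neq 0$ and $(d\beta)^{k}\neq 0$, while $(d\alpha)^{h+1}=0$ and $(d\beta)^{k+1}=0$; hence $d\alpha$ has rank exactly $2h$ and $d\beta$ rank exactly $2k$ at every point. Consequently the radicals $K_{\alpha}=\{v : i_{v}d\alpha=0\}$ and $K_{\beta}=\{v : i_{v}d\beta=0\}$ are smooth distributions of constant ranks $2k+2$ and $2h+2$, and I set $C=K_{\alpha}\cap K_{\beta}$.

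The heart of the existence-and-uniqueness statement is to show that $C$ has rank $2$ and that $(\alpha,\beta)$ restricts to a coframe on it. The linear map $C\to\mathbb{R}^{2}$, $v\mapsto(\alpha(v),\beta(v))$, has kernel consisting of vectors $v$ annihilated by all four of $\alpha$, $\beta$, $d\alpha$, $d\beta$; contracting such a $v$ into the volume form $\alpha\land(d\alpha)^{h}\land\beta\land(d\beta)^{k}$ gives zero, which forces $v=0$ because a nowhere-zero top form has trivial radical. Thus the map is injective and $\dim C\le 2$. On the other hand the elementary inequality $\dim(K_{\alpha}\cap K_{\beta})\ge \dim K_{\alpha}+\dim K_{\beta}-\dim M=2$ gives the reverse bound, so $\dim C=2$ and the map is an isomorphism. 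The vector fields $A,B$ are then the unique sections of $C$ dual to $(\alpha,\beta)$, namely $A$ solving $\alpha(A)=1,\ \beta(A)=0$ and $B$ solving $\alpha(B)=0,\ \beta(B)=1$; uniqueness is immediate from injectivity, and smoothness follows since $A,B$ are determined by a smoothly varying invertible linear system on the smooth distribution $C$.

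It remains to prove $[A,B]=0$, and the strategy is to verify that $[A,B]$ lies in $C$ and is killed by $(\alpha,\beta)$, whence it vanishes by the injectivity just established. For membership in $C$ I would use the operator identity $i_{[A,B]}=\mathcal{L}_{A}i_{B}-i_{B}\mathcal{L}_{A}$ together with $i_{B}d\alpha=0$ and $\mathcal{L}_{A}d\alpha=i_{A}d(d\alpha)+d(i_{A}d\alpha)=0$, which yields $i_{[A,B]}d\alpha=0$, and symmetrically $i_{[A,B]}d\beta=0$, so $[A,B]\in C$. For the remaining conditions, the formula $d\alpha(A,B)=A(\alpha(B))-B(\alpha(A))-\alpha([A,B])$ has all terms but the last equal to zero, since $i_{A}d\alpha=0$ and $\alpha(A),\alpha(B)$ are constants, forcing $\alpha([A,B])=0$; likewise $\beta([A,B])=0$.

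The step I expect to be the main obstacle is the pointwise identification of $C$, that is, pinning down its rank and the coframe property; once these are in place, existence, uniqueness, smoothness and commutativity all follow formally. In particular the only genuinely non-trivial input is the use of the volume form to show that no nonzero vector lies simultaneously in the radicals of $d\alpha$ and $d\beta$ and in the kernels of $\alpha$ and $\beta$.
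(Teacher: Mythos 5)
Your proof is correct and complete: the paper states this proposition without proof (citing \cite{Bande1,BH1}), and your argument --- constant rank of $d\alpha$ and $d\beta$ from the volume and degeneracy conditions, the dimension count showing the intersection of radicals has rank exactly $2$ with $(\alpha,\beta)$ a coframe on it, and the Cartan-calculus verification that $[A,B]$ lies in that intersection and is annihilated by both forms --- is essentially the standard argument of the cited references. No gaps.
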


In this paper we will only consider contact pairs of type $(h,0)$, so that $\beta$ is a closed one-form.
The dimension of the manifold is then $2h+2$.

More generally, we will consider pairs $(\alpha,\beta)$ of one-forms such that $d\beta=0$ and
$\alpha\land (d\alpha)^{h}\land\beta$ is a volume form, without requiring that $\alpha$ have constant 
class. We shall refer to these pairs as {\it generalized contact pairs} (of type $(h,0)$). In this 
case the kernels of $\alpha$ and $\beta$ form a special kind of contact-contact structure in
the sense of~\cite{BGK}, and we shall freely use the basic results from that paper.

For a generalized contact pair one defines a Reeb distribution as follows:
\begin{definition}[\cite{BGK}]
    The Reeb distribution $\RR$ consists of the tangent vectors $Y$ 
    satisfying the equation $(i_Yd\alpha )\vert_{\ker(\alpha)\cap\ker(\beta)}=0$.
\end{definition}
It is easy to see that this is a smooth distribution of rank two. We can unravel 
the definition as follows. At every point the $2$-form $d\alpha$ has rank either $2h$
or $2h+2$. If its rank at a point is $2h$, then at that point the fiber of the Reeb 
distribution $\RR$ is the kernel of $d\alpha$. If the rank of 
$d\alpha$ at a point is $2h+2$, then the form is symplectic 
in an open neighbourhood of that point, and on that 
neighbourhood $\RR$ is the symplectic orthogonal of $\ker(\alpha)\cap\ker(\beta)$.  
\begin{definition}[\cite{BGK}]\label{reebdist}
The Reeb vector fields $A$, $B$ of $(\alpha, \beta )$ are the unique 
vector fields tangent to the Reeb distribution $\RR$ such that 
$\alpha (A)=\beta (B)=1$, $\alpha (B)=\beta (A)=0$.
\end{definition}
In the special case that $\alpha$ has constant class, this definition coincides with the one in Proposition~\ref{p:Reeb},
and in that case the two Reeb vector fields commute. In fact, Proposition~5.8 in~\cite{BGK} shows that for a generalized
contact pair the Reeb vector fields commute if and only if $d\alpha$ is of constant rank $2h$, which means that we have
a contact pair of type $(h,0)$ in the sense of the original Definition~\ref{d:cpair}.
The difference between these two situations is measured by the Reeb class of the characteristic foliation $\ker(\alpha\land (d\alpha)^h)$.

\subsection{Locally conformally symplectic forms}\label{ss:lcs}

The notion of locally conformally symplectic forms is due to Lee~\cite{Lee}, and, in more modern form,
to Vaisman~\cite{V0, lcs}. We refer the reader to those references and to~\cite{DO} for a more detailed
discussion. Here we just recall the definition and the most basic notions.

\begin{definition}\label{d:lcs}
A locally conformally symplectic or lcs form on a manifold $M$ is a non-degenerate two-form $\omega$ which 
can be rescaled locally, in a neighborhood of any point in $M$, so as to be symplectic.
\end{definition}
This condition is equivalent to requiring that 
\begin{equation}\label{eq:lcs}
d\omega = \omega\wedge\theta \ ,
\end{equation}
for some closed one-form $\theta$, called the {\it Lee form} of the lcs form $\omega$. 

We shall always assume that the dimension of $M$ is $\geq 4$, for 
otherwise $\omega$ is closed and a volume form. If $\dim M >4$, then the closedness of $\theta$ is 
automatic, as it follows from~\eqref{eq:lcs} by exterior differentiation and the observation that in these 
dimensions the wedge product with a non-degenerate form is injective.

The lcs property is preserved under conformal rescalings of $\omega$, and the Lee form of $e^f\omega$ is 
$\theta +df$. Thus the de Rham cohomology class of the Lee form is an invariant of a conformal class of lcs
forms, and vanishes if and only if the form is globally conformally symplectic.

Due to its non-degeneracy, every lcs form $\omega$ has associated to it a unique vector field $L$ on $M$ defined by the equation
\begin{equation}\label{eq:L}
i_L\omega = \theta \ .
\end{equation}
Clearly $L$ satisfies $\theta (L)=0$, and the flow of $L$ preserves both $\theta$ and $\omega$.

\subsection{Locally conformally K\"ahler and Vaisman manifolds}\label{ss:lcK}
Let us consider a Hermitian manifold $(M, J, g)$, where $g$ is a Riemannian metric and $J$ a complex structure compatible with $g$. 
Let $\omega$ be its fundamental two-form defined by $\omega(X,Y)=g(X, JY)$. 
\begin{definition}
The Hermitian manifold $(M, J, g)$ is said to be locally conformally K\"ahler (lcK) if its fundamental $2$-form $\omega$
is locally conformally symplectic (lcs).
\end{definition}
This property is preserved under conformal rescalings of the metric. 


We refer to~\cite{DO, ornea} for a detailed account of lcK manifolds. In what follows we will be concerned with a special class of lcK manifolds:
\begin{definition}
An lcK manifold $(M, J, g)$ is called Vaisman if its Lee form $\theta$ is parallel with respect to the Levi-Civita connection of $g$.
\end{definition}
Whenever $\theta$ is parallel, it has constant length $\vert\vert\theta\vert\vert$, and in the sequel we will assume that this is non-zero, 
for otherwise $\omega$ would be a K\"ahler form. Without loss of generality we may then assume $\vert\vert\theta\vert\vert =1$.

For any lcK manifold denote by $B$ the vector field which is dual to $\theta$ with respect to $g$ and $A=-JB$. 
Then $A$ and $B$ are called the Lee and anti-Lee vector fields. When $\theta$ is non-zero we also define $\beta=\vert\vert\theta\vert\vert^{-1}\theta$, 
and $\alpha=-J\beta=-\beta \circ J$. Let $U$ be the dual of $\beta$ with respect to $g$, and $V=-JU$. The following two propositions 
are reformulations of Propositions 4.2 and 4.3 of~\cite{DO}, taking into account our sign convention for the Lee form:
\begin{proposition}\label{p:charact vaisman}
Let $(M, J, g)$ be a lcK manifold with Lee form $\theta$. Then $M$ is Vaisman if and only if $||\theta||$ is constant and $U$ is Killing for $g$.
\end{proposition}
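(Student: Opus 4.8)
The plan is to read off the Vaisman condition $\nabla\theta=0$ from two complementary symmetry properties of $\nabla\beta$, using crucially that the Lee form is closed. For any one-form $\gamma$ with metric dual $X$ I would invoke the two standard identities $d\gamma(Y,Z)=(\nabla_Y\gamma)(Z)-(\nabla_Z\gamma)(Y)$ and $(\mathcal L_X g)(Y,Z)=(\nabla_Y\gamma)(Z)+(\nabla_Z\gamma)(Y)$, both consequences of the torsion-freeness and metric-compatibility of the Levi-Civita connection. Hence $\gamma$ is closed exactly when the bilinear form $(Y,Z)\mapsto(\nabla_Y\gamma)(Z)$ is symmetric, while $X$ is Killing exactly when that form is antisymmetric.

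For the implication that a Vaisman structure forces the two stated conditions, I would begin from $\nabla\theta=0$. Then $\vert\vert\theta\vert\vert^2=g(B,B)$ is visibly constant, so $\vert\vert\theta\vert\vert$ is constant and $\beta=\vert\vert\theta\vert\vert^{-1}\theta$ is parallel as well. A parallel vector field is automatically Killing, so $U$ is Killing, and both desired conclusions follow at once.

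For the converse I would assume $\vert\vert\theta\vert\vert$ constant and $U$ Killing. Because the Lee form $\theta$ is closed and $\vert\vert\theta\vert\vert$ is constant, the normalized form $\beta=\vert\vert\theta\vert\vert^{-1}\theta$ is again closed, so $\nabla\beta$ is symmetric by the first identity. On the other hand, $U$ being Killing makes $\nabla\beta$ antisymmetric by the second. A bilinear form that is simultaneously symmetric and antisymmetric vanishes, so $\nabla\beta=0$; multiplying by the constant factor $\vert\vert\theta\vert\vert$ gives $\nabla\theta=0$, that is, $M$ is Vaisman.

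The only delicate point --- and the unique place where constancy of $\vert\vert\theta\vert\vert$ is genuinely used --- is the passage between $\theta$ and its normalization $\beta$: if $\vert\vert\theta\vert\vert$ were not constant then $\beta$ would fail to be closed, and the symmetric/antisymmetric clash that collapses $\nabla\beta$ to zero would break down. I would also remark that, perhaps surprisingly, this particular characterization uses the complex structure $J$ and the lcK equations only through the closedness of $\theta$; the compatible Hermitian structure plays no further role, so that the argument is, at heart, purely Riemannian.
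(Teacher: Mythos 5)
Your proof is correct. Note that the paper itself does not prove this statement: it is presented as a reformulation of Propositions 4.2 and 4.3 of the book of Dragomir and Ornea cited as \cite{DO}, so you are supplying an argument where the authors defer to the literature. Your argument is the standard one and it is complete: the two identities $d\gamma(Y,Z)=(\nabla_Y\gamma)(Z)-(\nabla_Z\gamma)(Y)$ and $(\mathcal{L}_Xg)(Y,Z)=(\nabla_Y\gamma)(Z)+(\nabla_Z\gamma)(Y)$ follow from torsion-freeness and metric compatibility respectively, and the ``closed plus Killing implies parallel'' clash is exactly the right mechanism. Two small remarks. First, the detour through the normalized form $\beta$ is not strictly necessary: since $\|\theta\|$ is constant, $U$ Killing is equivalent to the Lee vector field $B$ being Killing, and you can run the symmetric/antisymmetric argument directly on $\nabla\theta$, which is symmetric because the Lee form of an lcK structure is closed by definition; this slightly shortens the converse and makes the role of the constancy hypothesis (only to pass between $U$ and $B$, or equivalently between $\beta$ and $\theta$) even more transparent. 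Second, your closing observation is accurate and worth keeping: the characterization is purely Riemannian --- a closed one-form of constant norm with Killing dual is parallel --- and the complex structure enters only through the closedness of $\theta$. The implicit assumption that $\theta$ is nowhere zero (needed for $U$ to be defined, and made explicitly in Section~\ref{ss:lcK} of the paper) should be kept in mind, but it causes no difficulty since a vanishing Lee form is trivially parallel.
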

\begin{proposition}\label{p:properties-vaisman-mfd}
On a Vaisman manifold the following relations hold:
\begin{equation*}\label{eq:properties-vaisman-mfd}
\begin{split}
&L_UJ=L_VJ=0 \; , \quad\quad L_Vg=0 \ , \\
&[U,V]=0\; , \quad\quad d\alpha=\vert\vert\theta\vert\vert (\omega +\alpha \wedge \beta) \ .
\end{split} 
\end{equation*}
\end{proposition}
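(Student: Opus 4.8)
The plan is to reduce everything to the parallelism of the Lee form, which is the defining property of a Vaisman structure, together with the single Hermitian structure equation expressing $\nabla J$ in terms of $d\omega$. First I would fix the dictionary. Since $\theta$ is parallel and $\vert\vert\theta\vert\vert$ is constant, both $B=\theta^\sharp$ and $U=\vert\vert\theta\vert\vert^{-1}B$ are parallel vector fields; in particular $U$ is Killing, recovering the hypothesis of Proposition~\ref{p:charact vaisman}. Unwinding the definitions $\alpha=-\beta\circ J$ and $V=-JU$, one records $\beta=U^\flat$, $\alpha=-V^\flat$, $JU=-V$, $JV=U$, together with $i_U\omega=-\alpha$ and $i_V\omega=-\beta$. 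Finally, comparing $\omega(X,Y)=g(X,JY)$ with \eqref{eq:L} identifies the lcs Lee vector field as $L=-A=-\vert\vert\theta\vert\vert\,V$; since the flow of $L$ preserves $\omega$, we obtain $L_V\omega=0$ for free.

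The engine of the proof is the structure equation of a Hermitian (integrable) manifold, $2g((\nabla_XJ)Y,Z)=d\omega(X,JY,JZ)-d\omega(X,Y,Z)$, into which I would substitute the lcs relation \eqref{eq:lcs}, $d\omega=\omega\wedge\theta$. Two specializations do all the work, and together they are precisely the content of Propositions~4.2 and~4.3 of~\cite{DO} in our sign convention. Putting $X=U$: the terms carrying $\theta(U)=\vert\vert\theta\vert\vert$ cancel because $\omega(JY,JZ)=\omega(Y,Z)$, and the remaining terms cancel in pairs, giving $\nabla_UJ=0$. Putting the middle slot equal to $U$ and using $\theta(JU)=0$ yields $2g((\nabla_XJ)U,Y)=\vert\vert\theta\vert\vert\,\omega(X,Y)+\vert\vert\theta\vert\vert\big(g(X,V)g(U,Y)-g(X,U)g(V,Y)\big)$; under the interchange of $X$ and $Y$ both the bracket and $\omega$ change sign, so $g((\nabla_XJ)U,Y)$ is antisymmetric. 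Since $\nabla U=0$ gives $\nabla_XV=-(\nabla_XJ)U$, this says exactly that $\nabla V$ is skew, that is, $V$ is Killing and $L_Vg=0$.

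With these two computations in hand the remaining assertions are formal. Because $U$ is parallel and $\nabla$ is torsion free, $(L_UJ)X=[U,JX]-J[U,X]=(\nabla_UJ)X$, so $L_UJ=\nabla_UJ=0$. For $V$ we combine $L_Vg=0$ with the already established $L_V\omega=0$: differentiating $\omega(X,Y)=g(X,JY)$ along $V$ gives $0=(L_V\omega)(X,Y)=(L_Vg)(X,JY)+g(X,(L_VJ)Y)=g(X,(L_VJ)Y)$, whence $L_VJ=0$. The bracket relation then drops out, since $[U,V]=L_UV=L_U(-JU)=-(L_UJ)U-J[U,U]=0$. For the last identity I would use that $\nabla V$ is skew, so $d(V^\flat)(X,Y)=2g(\nabla_XV,Y)$; substituting $\alpha=-V^\flat$ and the displayed expression for $(\nabla_XJ)U$, and recognizing $g(X,V)g(U,Y)-g(X,U)g(V,Y)$ as a scalar multiple of $(\alpha\wedge\beta)(X,Y)$, produces $d\alpha=\vert\vert\theta\vert\vert(\omega+\alpha\wedge\beta)$, the precise sign of the $\alpha\wedge\beta$ term being dictated by the conventions $\omega(X,Y)=g(X,JY)$ and $\alpha=-\beta\circ J$.

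The main obstacle is the pair of $\nabla J$ computations in the second paragraph: they are mechanical but require careful bookkeeping of the wedge and interior-product sign conventions, and a matching of our normalization $A=-JB$ against the opposite convention used in~\cite{DO}. Once the structure equation is set up correctly the cancellations are automatic, and no further geometric input beyond $\nabla\theta=0$ is needed.
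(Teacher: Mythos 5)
The paper gives no proof of this proposition at all --- it is presented as a reformulation of Propositions 4.2 and 4.3 of \cite{DO} --- so your self-contained derivation is necessarily a different route, although it is the standard one (and essentially the argument in \cite{DO}). Your strategy and almost all of the execution are correct: the dictionary $\beta=U^\flat$, $\alpha=-V^\flat$, $i_U\omega=-\alpha$, $i_V\omega=-\beta$, $L=-A=-\vert\vert\theta\vert\vert V$ checks out; the two specializations of the Hermitian structure equation do give $\nabla_UJ=0$ and exactly the displayed formula for $2g((\nabla_XJ)U,Y)$; and the deductions $L_Vg=0$, $L_UJ=L_VJ=0$ and $[U,V]=0$ all follow as you say.

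The one genuine problem is the sign in the final identity. With $\alpha=-V^\flat$ and $\beta=U^\flat$ one has $(\alpha\wedge\beta)(X,Y)=-g(X,V)g(U,Y)+g(X,U)g(V,Y)$, so the bracket in your formula for $2g((\nabla_XJ)U,Y)$ equals $-(\alpha\wedge\beta)(X,Y)$, and your computation actually yields $d\alpha=\vert\vert\theta\vert\vert(\omega-\alpha\wedge\beta)$; the claim that the conventions ``dictate'' the plus sign is where the slip occurs. The minus sign is in fact forced by the paper's conventions, as one sees without any curvature computation: since $L_U\omega=0$ (from $L_Ug=0$ and $L_UJ=0$) and $i_U\omega=-\alpha$, Cartan's formula gives $d\alpha=L_U\omega-d(i_U\omega)+d\alpha=i_Ud\omega=i_U(\omega\wedge\theta)=-\alpha\wedge\theta+\theta(U)\,\omega=\vert\vert\theta\vert\vert(\omega-\alpha\wedge\beta)$. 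Equivalently, evaluating on $(U,V)$ gives $d\alpha(U,V)=0$ while $(\omega+\alpha\wedge\beta)(U,V)=2$, and applying $d$ to the plus-sign version produces $0=2\vert\vert\theta\vert\vert^2\,\omega\wedge\beta$, which would force the structure to be K\"ahler. So the identity as printed in the statement is itself incompatible with the Lee-form convention $d\omega=\omega\wedge\theta$ of Section~\ref{ss:lcs} (it presumably inherits the opposite convention from \cite{DO}); you should record the identity with the minus sign, or flag the conventional discrepancy, rather than adjust your (correct) intermediate formula to match the printed statement.
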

On any Vaisman manifold we denote by $\DD$ the rank $2$ distribution spanned by $U$ and $V$. This is integrable and 
$J$-invariant. Its $g$-orthogonal complement $\DD^{\perp}$ is not integrable, but is $J$-invariant, and equals $\ker (\alpha)\cap\ker(\beta)$.
We define a new almost complex structure $T$ compatible with $g$ by setting $T=J$ on $\DD^{\perp}$ and $T=-J$ on $\DD$.
The following proposition is straightforward, but does not seem to have been observed before.
\begin{proposition}\label{p:Tint}
On a Vaisman manifold $(M, J, g)$ the almost complex structure $T$ is integrable, and induces the orientation opposite to the 
one induced by $J$.
\end{proposition}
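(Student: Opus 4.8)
The plan is to treat the two claims separately: the orientation statement is elementary, while integrability will follow from the Newlander--Nirenberg theorem once I check that the Nijenhuis tensor of $T$ vanishes.

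For the orientation, recall that $U=\theta^{\sharp}$ (the dual of the parallel Lee form) and $V=-JU$ are orthonormal and span the $J$-invariant plane $\DD$, whose $g$-orthogonal, $J$-invariant complement is $\DD^{\perp}=\ker(\alpha)\cap\ker(\beta)$. By construction $T=J$ on $\DD^{\perp}$ and $T=-J$ on the complex line $\DD$. Since the orientation induced by an almost complex structure on a direct sum of invariant subspaces is the product of the induced orientations, and since replacing $J$ by $-J$ on a complex line reverses the orientation of that line, $T$ induces the orientation opposite to that of $J$.

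For integrability I would compute $N_T$ on a local frame adapted to $TM=\DD\oplus\DD^{\perp}$; because both summands are $J$- and $T$-invariant, this splits into three cases according to how many arguments lie in $\DD$. The case of two arguments in $\DD$ is immediate, since $\DD$ is spanned by $U,V$ with $[U,V]=0$ (Proposition~\ref{p:properties-vaisman-mfd}), so every bracket in $N_T(U,V)$ vanishes. For the mixed case I would use the symmetries of the structure: by Proposition~\ref{p:properties-vaisman-mfd} we have $L_UJ=L_VJ=0$ and $L_Vg=0$, and $U$ is Killing because it is parallel; as $[U,V]=0$, the flows of $U$ and $V$ preserve $g$, $J$ and $\DD$, hence $\DD^{\perp}$ and $T$, so that $L_UT=L_VT=0$. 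Feeding $Y\in\{U,V\}$ into $N_T(X,Y)$ and using the automorphism identity $[TX,Y]=T[X,Y]$ together with $T^{2}=-\id$ (and $TU=V$, $TV=-U$) makes the four terms cancel in pairs.

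The only substantial case, and the main obstacle, is $X,Y\in\DD^{\perp}$, precisely because $\DD^{\perp}$ fails to be integrable. Writing $P$ for the projection onto $\DD$ and subtracting the vanishing tensor $N_J$ (recall $J$ is integrable), a short manipulation reduces the problem to showing $N_T(X,Y)=2JP([JX,Y]+[X,JY])=0$, a quantity valued in $\DD$. I would then compute the $\DD$-component of a bracket of two sections of $\DD^{\perp}$ directly: $d\beta=0$ forces $\beta([X,Y])=0$, while $d\alpha=\omega+\alpha\wedge\beta$ (Proposition~\ref{p:properties-vaisman-mfd}, with $\|\theta\|=1$) gives $\alpha([X,Y])=-\omega(X,Y)$, so that $P([X,Y])=\omega(X,Y)\,V$. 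Substituting this into the two brackets, the obstruction becomes a multiple of $\omega(JX,Y)+\omega(X,JY)=g(X,Y)-g(X,Y)=0$, where the cancellation is exactly the compatibility of $\omega$ with $J$. Thus $N_T$ vanishes on all of $TM$ and $T$ is integrable. The crux of the argument is this last identity: everything hinges on the non-integrability of $\DD^{\perp}$ being governed by $\omega$ via $d\alpha$, together with the $J$-invariance of $\omega$ that makes the resulting obstruction cancel.
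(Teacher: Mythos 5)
Your proof is correct, and while it uses the same overall framework as the paper (Newlander--Nirenberg plus a case analysis of $N_T$ along the splitting $TM=\DD\oplus\DD^{\perp}$), the weight of the argument falls in a different place. The paper's computation is concentrated in the mixed case $X\in\DD$, $Y\in\DD^{\perp}$, where it uses $L_XJ=0$ to reduce $N_T(X,Y)$ to $(T-J)$ applied to vectors in $\DD^{\perp}$; your treatment of that case via $L_UT=L_VT=0$ and the resulting automorphism identity $[TX,Y]=T[X,Y]$ is a cleaner packaging of the same idea. The genuine difference is in the case $X,Y\in\DD^{\perp}$: the paper dismisses it with ``$N_T(X,Y)=N_J(X,Y)$ by the definition of $T$,'' but since $\DD^{\perp}$ is not integrable the brackets $[JX,Y]$ and $[X,JY]$ can have nonzero $\DD$-components, so the identification of $N_T$ with $N_J$ there really requires showing $P([JX,Y]+[X,JY])=0$. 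You identify this as the crux and prove it by computing the $\DD$-component of a bracket of sections of $\DD^{\perp}$ from $d\beta=0$ and $d\alpha=\|\theta\|(\omega+\alpha\wedge\beta)$, and then invoking the $J$-invariance of $\omega$ to get $\omega(JX,Y)+\omega(X,JY)=0$. This supplies a justification that the paper's proof asserts without detail, and it isolates the geometric reason the result holds: the non-integrability of $\DD^{\perp}$ is measured exactly by $\omega$, whose $J$-invariance makes the obstruction cancel. (Your overall constant in the reduction $N_T-N_J=cJP(\cdots)$ differs from what the paper's normalization of $N_T$ gives, and the sign in $P([X,Y])=\omega(X,Y)V$ depends on conventions, but neither affects the vanishing.)
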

\begin{proof}
The statement about the orientations is clear since $T$ is defined by conjugating $J$ on a subbundle of odd complex rank.
By the Newlander--Nirenberg theorem, to check the integrability of $T$ it suffices to check the vanishing of its Nijenhuis tensor:
$$
N_T(X,Y)=2([TX,TY]-[X,Y]-T[TX,Y]-T[X,TY]) \ .
$$
Since this is a tensor, and is skew-symmetric, we only have to check the vanishing of $N_T(X,Y)$ in the following three cases:
both $X$ and $Y$ are in $\DD^{\perp}$, both $X$ and $Y$ are in $\DD$, or $X\in\DD$ and $Y\in\DD^{\perp}$. In the first two 
cases $N_T(X,Y)=N_J(X,Y)$ by the definition of $T$, and this vanishes by the integrability of $J$. For the final, third, case 
we may assume that $X$ is a constant linear combination of $U$ and $V$. Then $L_XJ=0$ by Proposition~\ref{p:properties-vaisman-mfd},
and so $[X,JY]=J[X,Y]$ and $[JX,JY]=J[JX,Y]$. Using this we calculate:
\begin{alignat*}{1}
N_T(X,Y) &=2([TX,TY]-[X,Y]-T[TX,Y]-T[X,TY])\\
&=2(-[JX,JY]-[X,Y]+T[JX,Y]-T[X,JY])\\
&=2(-J[JX,Y]-[X,Y]+T[JX,Y]-TJ[X,Y])\\
&=2((T-J)[JX,Y]+T(T-J)[X,Y]) \ .
\end{alignat*}
Since $X$ and $JX$ are constant linear combinations of $U$ and $V$ and $Y$ is in $\DD^{\perp}$, it follows from 
Proposition~\ref{p:properties-vaisman-mfd} that $[X,Y]$ and $[JX,Y]$ are also in $\DD^{\perp}$. Since $T-J$ vanishes
on $\DD^{\perp}$, we finally conclude the vanishing of $N_T(X,Y)$.
\end{proof}

\section{Contact pairs, lcs forms and fibrations over the circle}\label{s:rel}

In this section we explain the relation between locally conformally symplectic forms, (generalized)
contact pairs of type $(h,0)$, and suspensions of contactomorphisms.

Suppose that $\omega$ is an lcs form, and $X$ is a vector field satisfying $L_X\omega =0$. Then we have:
$$
\omega\wedge L_X\theta = \omega\wedge L_X\theta +L_X\omega\wedge\theta =L_X(\omega\wedge\theta) = L_Xd\omega = dL_X\omega = 0 \ ,
$$
so that, by the non-degeneracy of $\omega$, we conclude $L_X\theta = 0$, which, by the closedness of $\theta$, is equivalent to 
$d (\theta (X))=0$. Thus $\theta (X)$ is constant, and if it is non-zero we can normalize $X$ so that $\theta (X)= 1$.

The following result is a small elaboration on the work of Vaisman~\cite[Proposition 2.2]{lcs}:
\begin{proposition}\label{p:equiv}
Let $M$ be a smooth manifold of dimension $2h+2$. There is a bijection between the following two kinds of structures:
\begin{enumerate}
\item contact pairs $(\alpha,\beta)$ of type $(h,0)$, and
\item locally conformally symplectic forms $\omega$ with Lee form $\theta$ admitting a vector field $X$ satisfying $L_X\omega=0$ and $\theta (X)=1$.
\end{enumerate}
Under this bijection the Reeb vector fields $A$ and $B$ of $(\alpha,\beta)$ correspond to $L$ and $X$ respectively. Moreover,
$\omega^{h+1}$ and $\alpha\wedge (d\alpha)^h\wedge\beta$ define the same orientation on $M$.
\end{proposition}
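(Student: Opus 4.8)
The plan is to write down explicit maps in both directions and check that they are mutually inverse. For the forward map I would send a contact pair $(\alpha,\beta)$ of type $(h,0)$ to the two-form $\omega = d\alpha + \alpha\wedge\beta$, with candidate Lee form $\theta=\beta$ and candidate vector field $X=B$. Since $d\beta=0$, a direct computation gives $d\omega = d\alpha\wedge\beta = \omega\wedge\beta$, so $\omega$ satisfies the lcs equation \eqref{eq:lcs} with Lee form $\beta$. Non-degeneracy and the orientation statement come out together: because $(\alpha\wedge\beta)^2=0$ and $(d\alpha)^{h+1}=0$, the binomial expansion of $\omega^{h+1}$ collapses to $\omega^{h+1} = (h+1)\,\alpha\wedge(d\alpha)^h\wedge\beta$, a positive multiple of the volume form supplied by Definition~\ref{d:cpair}. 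That $X=B$ works follows from the Reeb identities of Proposition~\ref{p:Reeb}: one has $\beta(B)=1$, and a Cartan-formula computation using $i_Bd\alpha=0$, $\alpha(B)=0$, $\beta(B)=1$ yields $i_B\omega=-\alpha$ and hence $L_B\omega=0$. The same identities give $i_A\omega=\beta=\theta$, so the Reeb field $A$ is exactly the vector field $L$ of \eqref{eq:L}.

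For the reverse map, given $(\omega,\theta,X)$ I would set $\beta=\theta$ and $\alpha=-i_X\omega$. The central computation is that $L_X\omega=0$ combined with Cartan's formula gives $d\alpha = -d(i_X\omega) = i_Xd\omega$, and then inserting $d\omega=\omega\wedge\theta$ together with $\theta(X)=1$ produces $d\alpha = \omega - \alpha\wedge\beta$, that is $\omega = d\alpha+\alpha\wedge\beta$, precisely matching the forward construction. The closedness of $\beta$ is immediate from that of $\theta$, so it remains only to verify the two non-degeneracy/rank axioms of Definition~\ref{d:cpair} for $(\alpha,\beta)$.

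The step I expect to be the crux is establishing $(d\alpha)^{h+1}=0$ in this reverse direction. The key observation is that $d\alpha = i_Xd\omega$ forces $i_Xd\alpha = i_Xi_Xd\omega = 0$, so $X$ lies in the radical of $d\alpha$; consequently $i_X\big((d\alpha)^{h+1}\big) = (h+1)(i_Xd\alpha)\wedge(d\alpha)^h = 0$. Since $(d\alpha)^{h+1}$ is a top-degree form on the $(2h+2)$-dimensional manifold and $X$ is nowhere zero (because $\theta(X)=1$), contraction with $X$ is injective on top forms, so $(d\alpha)^{h+1}=0$. With this in hand the same collapse of the binomial expansion gives $\omega^{h+1}=(h+1)\,\alpha\wedge(d\alpha)^h\wedge\beta$, and the non-degeneracy of $\omega$ shows the right-hand side is a volume form, simultaneously establishing the remaining axiom and the shared orientation. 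Finally I would confirm that the two maps are inverse to one another: the relation $-i_B\omega=\alpha$ recovers $\alpha$ from the forward data, while the Reeb characterization of Proposition~\ref{p:Reeb}, applied with $i_Xd\alpha=0$, $\alpha(X)=0$, $\beta(X)=1$, identifies the chosen $X$ as the Reeb field $B$ of $(\alpha,\beta)$, so neither composition loses information.
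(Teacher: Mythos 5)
Your proposal is correct, and the overall architecture is the same as the paper's: the same pair of maps $(\alpha,\beta)\mapsto d\alpha+\alpha\wedge\beta$ and $(\omega,\theta,X)\mapsto(-i_X\omega,\theta)$, the same pivotal identity $\omega=d\alpha+\alpha\wedge\beta$ obtained from $L_X\omega=0$ and $\theta(X)=1$, and the same identification of the Reeb fields via $i_A\omega=\theta$ and the characterization in Proposition~\ref{p:Reeb}. The one step where you genuinely diverge is the one you correctly flag as the crux, namely $(d\alpha)^{h+1}=0$ in the reverse direction. The paper starts from the dimension-count identity $\omega^{h+1}\wedge\theta=0$, contracts with $X$, and recognizes the result as $(d\alpha)^{h+1}$ by expanding $\omega^{h+1}=(d\alpha+\alpha\wedge\beta)^{h+1}$; you instead note that $d\alpha=i_Xd\omega$ forces $i_Xd\alpha=0$, so the nowhere-vanishing field $X$ lies in the radical of $d\alpha$, and then invoke the injectivity of $i_X$ on top-degree forms. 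Both arguments are elementary and complete; yours isolates the clean pointwise fact that a $2$-form with nontrivial radical on a $(2h+2)$-dimensional space has vanishing $(h+1)$-st power, whereas the paper's single contraction computation delivers $(d\alpha)^{h+1}=0$ and the volume-form identity $\omega^{h+1}=(h+1)\,\alpha\wedge(d\alpha)^h\wedge\beta$ (hence the orientation claim) in one stroke, which you then re-derive separately from the binomial expansion. Either way the verification that the two constructions are mutually inverse goes through exactly as you describe.
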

\begin{proof}
Suppose we have a contact pair $(\alpha,\beta)$ of type $(h,0)$. Then $\omega = d\alpha +\alpha\wedge\beta$ is an lcs form with Lee form $\theta = \beta$.
Moreover, for the Reeb vector field $A$ we have $i_A\omega =\beta$ by the defining properties of the Reeb vector field. Thus $A=L$. For the other
Reeb vector field, $B$, we have $L_B\omega=0$ because the flow of $B$ preserves $\alpha$ and $\beta$ and therefore $\omega$, and $\beta (B)=1$
by definition. Thus $B$ has all the properties required of $X$.

Conversely, suppose $\omega$ is lcs with Lee form $\theta$, and $X$ satisfies $L_X\omega=0$ and $\theta (X)=1$. Then define $\alpha = -i_X\omega$
and $\beta = \theta$. We claim that this is a contact pair of type $(h,0)$ with Reeb vector fields $L$ and $X$. 

For dimension reasons $\omega^{h+1}\wedge\theta=0$. This implies
\begin{equation*}
\begin{split}
0 &=i_X(\omega^{h+1}\wedge\theta)= (h+1)i_X\omega\wedge\omega^h\wedge\theta + \theta(X)\omega^{h+1}\\
 &= \omega^{h+1}-(h+1)\omega^h\wedge\alpha\wedge\beta = (d\alpha)^{h+1} \ ,
\end{split}
\end{equation*}
where the last equality follows from
$$
d\alpha +\alpha\wedge\beta = -di_X\omega - i_X\omega\wedge\theta = -di_X\omega - i_Xd\omega +\theta(X)\omega = -L_X\omega +\omega = \omega \ .
$$
Thus the rank of $d\alpha$ at every point is at most $2h$. We also have 
$$
0  \neq \omega^{h+1} = (h+1)\alpha\wedge (d\alpha)^h\wedge\beta \ .
$$
As $\beta=\theta$ is closed, we conclude that $(\alpha,\beta)$ is indeed a contact pair of type $(h,0)$. 

To determine the Reeb vector fields note that 
\begin{alignat*}{1}
i_Ld\alpha &= i_Ld(-i_X\omega)=i_L(-L_X\omega+i_Xd\omega)=i_Li_X(\omega\wedge\theta)=i_L(i_X\omega\wedge\theta+\omega)\\
&=\omega(X,L)\theta+i_L\omega = -\omega(L,X)\theta+\theta = -(i_L\omega)(X)\theta+\theta=-\theta(X)\theta+\theta=0 \ ,
\end{alignat*}
and 
$$
i_Xd\alpha = i_Xd(-i_X\omega)=i_X(-L_X\omega+i_Xd\omega)=i_Xi_Xd\omega=0 \ ,
$$
where we have used the assumptions $L_X\omega=0$, $\theta (L)=0$ and $\theta(X)=1$ repeatedly.
Thus we have shown that $L$ and $X$ span the Reeb distribution of our contact pair. Checking how $\alpha$ evaluates on $L$ and $X$ and combining 
the result with $\theta (L)=0$ and $\theta(X)=1$, we see $A=L$ and $B=X$.

The two constructions we have given are clearly inverses of each other, and the claim about orientations follows from the above calculations.
Thus the proof is complete.
\end{proof}

There is a partial generalization of this result to the case of generalized contact pairs in place of contact pairs. 
\begin{proposition}
On a closed manifold every generalized contact pair $(\alpha,\beta)$ of type $(h,0)$ gives rise to lcs forms $\omega = d\alpha +c\alpha\wedge\beta$ for large enough
$c\in\R$. The Lee form $\theta$ of $\omega$ equals $c\beta$.
\end{proposition}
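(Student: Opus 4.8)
The plan is to treat the two requirements for an lcs form separately: the structure equation $d\omega = \omega\wedge\theta$ and the non-degeneracy of $\omega$. The first will hold for every value of $c$, so that taking $c$ large plays a role only in securing non-degeneracy, and it is there that compactness of $M$ enters.

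First I would confirm that $\theta = c\beta$ serves as Lee form. Since $d\beta = 0$ one gets $d\omega = d(d\alpha + c\alpha\wedge\beta) = c\,d\alpha\wedge\beta$, while $\beta\wedge\beta = 0$ gives $\omega\wedge\theta = (d\alpha + c\alpha\wedge\beta)\wedge c\beta = c\,d\alpha\wedge\beta$ as well. Hence $d\omega = \omega\wedge\theta$ for all $c$, and $\theta$ is visibly closed; note that no use of the rank of $d\alpha$ is made at this stage, which is why the generalized hypothesis causes no difficulty for the structure equation.

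Next I would compute the top power. As $d\alpha$ and $\alpha\wedge\beta$ are both two-forms they commute in the exterior algebra, and $(\alpha\wedge\beta)^2 = 0$ because $\alpha\wedge\alpha = 0$, so all but the two lowest terms of the binomial expansion vanish:
\begin{equation*}
\omega^{h+1} = (d\alpha)^{h+1} + (h+1)c\,(d\alpha)^h\wedge\alpha\wedge\beta = (d\alpha)^{h+1} + (h+1)c\,\mu \ ,
\end{equation*}
where $\mu = \alpha\wedge(d\alpha)^h\wedge\beta$ is the given volume form (the degree of $(d\alpha)^h$ being even, no sign intervenes in the reordering). Because $\mu$ trivializes the top exterior power, I would write $(d\alpha)^{h+1} = f\mu$ for a unique smooth function $f$ on $M$, giving $\omega^{h+1} = \bigl(f + (h+1)c\bigr)\mu$.

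The final step is to make $f + (h+1)c$ nowhere zero. Since $M$ is closed, $f$ is bounded, so choosing $c$ with $(h+1)c > -\min_M f$ forces $f + (h+1)c > 0$ everywhere; then $\omega^{h+1}$ is a volume form and $\omega$ is non-degenerate. The one point worth isolating — the reason \emph{large enough} cannot be dropped — is that for a genuine contact pair $(d\alpha)^{h+1} = 0$, whence $f \equiv 0$ and any $c\neq 0$ works, recovering the case $c=1$ of Proposition~\ref{p:equiv}; for a generalized contact pair $d\alpha$ may have rank $2h+2$ on part of $M$, making $f$ genuinely nonzero, and it is precisely compactness that lets the term linear in $c$ dominate it.
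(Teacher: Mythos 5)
Your proof is correct and follows essentially the same route as the paper: verify the Lee-form identity using $d\beta=0$, expand $\omega^{h+1}$ binomially, and use compactness to let the term $c(h+1)\,\alpha\wedge(d\alpha)^h\wedge\beta$ dominate $(d\alpha)^{h+1}$. The only difference is that you make the domination step explicit by writing $(d\alpha)^{h+1}=f\mu$ and bounding $f$, which the paper leaves implicit.
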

\begin{proof}
We have
$$
d\omega = d(d\alpha +c\alpha\wedge\beta)=c \ d\alpha\wedge\beta = \omega\wedge c\beta.
$$
To check non-degeneracy we compute
$$
\omega^{h+1} = (d\alpha +c\alpha\wedge\beta)^{h+1} = (d\alpha)^{h+1}+c(h+1)\alpha\wedge (d\alpha)^h\wedge\beta \ .
$$
For large enough $c$, the second summand, which is a volume form by the assumption on $(\alpha,\beta)$, dominates the first summand,
so that the right hand side is a volume form.
\end{proof}
In this case the equality $A=L$ no longer holds, in fact $L$ is in general not proportional to the Reeb vector field $A$.
The other Reeb vector field $B$ does not give an infinitesimal automorphism $X$ of the lcs form.

If a closed manifold $M$ admits a (possibly generalized) contact pair $(\alpha,\beta)$, then the existence of the closed non-vanishing 
one-form $\beta$ implies that $M$ fibers over the circle. By a perturbation of $\beta$ in the space of closed one-forms one can achieve that $\beta$ represents a rational
cohomology class, so that a primitive integral multiple of it defines a fibration $M\longrightarrow S^1$ with connected fibers.
As soon as the perturbation is small enough (in the $C^0$  norm), the new (integral) $\beta$ still forms a (generalized) contact pair together with the same $\alpha$ 
as before. In this case the Reeb vector field $B$ is the monodromy vector field of the fibration over $S^1$. The restriction of $\alpha$ to any
fiber $F$ is a contact form, and the monodromy preserves the contact structure. The mondromy preserves the contact form if and only if 
$d\alpha$ has rank $2h$, and not $2h+2$, everywhere, which means that we have a genuine contact pair of type $(h,0)$, and not a generalized one.


We summarize this discussion in the following:
\begin{proposition}
Every closed manifold admitting a generalized contact pair of type $(h,0)$ fibers over the circle with fiber a contact manifold and the monodromy acting by a contactomorphism. 
Conversely, every mapping torus of a contactomorphism admits a generalized contact pair of type $(h,0)$  and an induced lcs form.
\end{proposition}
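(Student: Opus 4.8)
The plan is to prove the two implications separately, leaning on the discussion preceding the statement for the forward direction and supplying an explicit suspension construction for the converse.

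For the forward implication, I would start from a generalized contact pair $(\alpha,\beta)$ of type $(h,0)$ on a closed $M$. Since $\alpha\wedge(d\alpha)^h\wedge\beta$ is a volume form, $\beta$ is a nowhere-vanishing closed one-form, so by Tischler's theorem a small perturbation of $\beta$ within the closed one-forms represents a rational class and, after scaling, defines a fibration $M\ra S^1$ with connected fiber $F$. The volume condition involves $\beta$ only through $\beta$ itself, so for a $C^0$-small perturbation the pair is still a generalized contact pair, and the discussion preceding the Proposition identifies the Reeb field $B$ with the monodromy vector field. Two things remain. First, $\alpha$ restricts to a contact form on $F$: since $TF=\ker(\beta)$, restricting $\alpha\wedge(d\alpha)^h\wedge\beta$ to a transversal shows that $\alpha|_F\wedge(d\alpha|_F)^h$ is a volume form on $F$. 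Second, the monodromy is a contactomorphism: since $\alpha(B)=0$ we have $L_B\alpha=i_Bd\alpha$, and because $B$ lies in the Reeb distribution, $i_Bd\alpha$ annihilates $\ker(\alpha)\cap\ker(\beta)$ and is therefore a pointwise combination $\lambda\alpha+\mu\beta$; evaluating on $B$ gives $\mu=0$, so $L_B\alpha=\lambda\alpha$. Hence the flow of $B$ rescales $\alpha$, its time-one return map preserves $\ker(\alpha|_F)$, and it is strict exactly when $\lambda\equiv0$, i.e. when the pair is genuine.

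For the converse, let $\phi$ be a contactomorphism of a closed contact manifold $(F,\alpha_0)$, so $\phi^*\alpha_0=f\alpha_0$ with $f>0$, and form the mapping torus $M_\phi=F\times\R/\!\sim$ with $(x,t)\sim(\phi(x),t+1)$; then $dt$ descends to a closed nowhere-vanishing $\beta$. To build $\alpha$ I would produce a smooth family $\{\alpha_t\}_{t\in\R}$ of contact forms on $F$ satisfying the equivariance $\phi^*\alpha_{t+1}=\alpha_t$, so that the horizontal form $\tilde\alpha_{(x,t)}=(\alpha_t)_x$ is invariant under the generating deck transformation and descends to $\alpha$. On the fundamental domain $t\in[0,1]$ I would interpolate by rescaling, setting $\alpha_t=\bigl(1+s(t)(\tilde f-1)\bigr)\alpha_0$ with $\tilde f=1/(f\circ\phi\inv)>0$ and $s\colon[0,1]\ra[0,1]$ smooth, $s\equiv0$ near $0$ and $s\equiv1$ near $1$, then extend by the equivariance relation; one checks $\alpha_1=\tilde f\alpha_0=(\phi\inv)^*\alpha_0$, so $\phi^*\alpha_1=\alpha_0$ and the extension is consistent. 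The plateaus make $\alpha_t$ locally constant in $t$ near the integers, which is precisely what guarantees smoothness across the gluing, and this is the main technical point of the argument. A short computation using $\alpha_0\wedge\alpha_0=0$ gives $\alpha_t\wedge(d_F\alpha_t)^h=\bigl(1+s(t)(\tilde f-1)\bigr)^{h+1}\,\alpha_0\wedge(d_F\alpha_0)^h$, so every $\alpha_t$ is a contact form.

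Finally I would extract the volume form and the lcs form. Wedging with $\beta=dt$ annihilates the $dt$-component of $d\alpha$, so $\alpha\wedge(d\alpha)^h\wedge\beta=\alpha_t\wedge(d_F\alpha_t)^h\wedge dt$, which is a volume form on $M_\phi$; hence $(\alpha,\beta)$ is a generalized contact pair of type $(h,0)$, genuine exactly when $\phi$ is a strict contactomorphism (so that $\tilde f\equiv1$ and $\alpha_t\equiv\alpha_0$). The induced lcs form is then $\omega=d\alpha+c\,\alpha\wedge\beta$ for large $c$, by the preceding Proposition on generalized contact pairs. The only genuine obstacle is the smoothness of the suspended one-form at the gluing, and the plateau interpolation disposes of it cleanly.
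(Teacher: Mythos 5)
Your proof is correct and follows the same route as the paper: the forward direction is exactly the paper's argument (perturb $\beta$ to an integral class, identify $B$ as the monodromy vector field, restrict $\alpha$ to the fibers), with your computation $L_B\alpha=\lambda\alpha$ correctly supplying the detail, which the paper leaves implicit, that the monodromy preserves the contact structure and is strict precisely when the pair is genuine. For the converse the paper gives no explicit argument at all, and your equivariant interpolation $\alpha_t=\bigl(1+s(t)(\tilde f-1)\bigr)\alpha_0$ with plateaus near the integers is the standard suspension construction that fills this in correctly (note only that you implicitly restrict to coorientation-preserving contactomorphisms by assuming $f>0$, which is also what the forward direction produces).
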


\section{Normal metric contact pairs and Vaisman structures}\label{s:lck-characterization}

Metric and normal contact pairs have been studied in~\cite{BH2, BH3}. 
We begin this section by giving a reformulation of these notions. The reformulation is then used
to relate normal metric contact pairs to Vaisman manifolds and thereby obtain some classification results.

Let $(\alpha, \beta)$ be a contact pair of type $(h,0)$ on a manifold $M$, with Reeb vector fields $A$ and $B$. 
The tangent bundle $TM$ splits as $TM=G\oplus \bR A \oplus \bR B$, where $G$ is the subbundle $\ker \alpha \cap \ker \beta$. 
On $G$ the form $d\alpha$ is symplectic, and so $G$ can be endowed with an almost complex structure $J_0$ and a compatible metric $g_0$. 

A natural way to extend the almost complex structure to the whole tangent bundle is to require that it intertwines the Reeb vector fields. 
In this way we obtain two almost complex structures $J$ and $T$ on $TM$ which coincide on $G$ but are complex conjugates of each other 
on $\bR A \oplus \bR B$. In particular, they give opposite orientations. We call $J$ the almost complex structure for which $JA=B$ and $T$ the other one 
(with $TB=A$). 
Conversely, given an almost complex structure $J$ which preserves the splitting of $TM$ and satisfies $JA= B$,  one can form a unique $T$ as before
by conjugating $J$ on the Reeb distribution.

The Riemannian metric $g_0$ can be extended to the whole $TM$ by putting $g= g_0 \oplus\alpha^2 \oplus \beta ^2$, and this makes the splitting of $TM$ orthogonal. 
With this choice, the Reeb action of the contact pair becomes totally geodesic~\cite{BH2} and such a metric is called \textit{associated} to the contact pair. More precisely we have:
\begin{definition}\label{d:associated metric}
Let $M$ be a manifold endowed with a contact pair $(\alpha, \beta)$ of type $(h,0)$. Let $A,B$ be its Reeb vector fields and $J$ an almost complex structure such that $JA=B$. 
A Riemannian metric $g$ on $M$ is called {\it associated} to the contact pair if for all vector fields $X,Y$ we have:
\begin{equation*}
g(X, J Y)= (d\alpha-\alpha \wedge \beta) (X,Y) \, .\\
\end{equation*}
The $4$-tuple $(\alpha, \beta , J,g)$ will be called {\it metric contact pair}.
\end{definition}
The above discussion shows that, given the contact pair, a pair $(J,g)$ always exists. 
Also, observe that from Definition \ref{d:associated metric} one easily deduces that the Reeb vector fields $A,B$ are $g$-dual to $\alpha$ and $\beta$ respectively. Then the splitting $G\oplus \bR A \oplus \bR B$ is orthogonal with respect to $g$. Since $J$ is $g$-orthogonal and $JA= B$, it preserves $G$. This implies that $J=J_0 \oplus J_1$ where $J_0$ and $J_1$ are the almost complex structures induced by $J$ on $G$ and on $\bR A \oplus \bR B$ respectively. Then the almost complex structure $T$ defined by $J_0 \oplus (-J_1)$ is uniquely determined by $J$ and $g$.
It is clear that $T$ and $J$ coincide on $G$, are complex conjugate to each other on $\bR A \oplus \bR B$, and give opposite orientations.  Moreover we have $g(X, TY)= (d\alpha+\alpha \wedge \beta) (X,Y) $.

The triples $(M,J,g)$ and $(M,T,g)$ are almost Hermitian structures and their fundamental forms are $d\alpha- \alpha \wedge \beta$  and $d\alpha + \alpha \wedge \beta$ respectively.
\begin{definition}[\cite{BH2}]
The metric contact pair $(\alpha, \beta , J,g)$ is called \textit{normal} if both $J$ and $T$ are integrable.
\end{definition}
By Proposition 3.1 of \cite{BH3}, a metric contact pair $(\alpha, \beta , J,g)$ is normal if and only if $J$ is integrable and $L_A J=0$ or, equivalently, if $J$ is integrable and $L_B J=0$.

Now we want to make clear the link between metric contact pairs and lcK structures:
\begin{proposition}\label{p:contact-pair-lck}
Let $(\alpha, \beta , J,g)$ be a metric contact pair of type $(h,0)$ on a manifold $M$. If $J$ (resp. $T$) is integrable, then $(M,J, g)$ (resp. $(M,T, g)$) is lcK.
\end{proposition}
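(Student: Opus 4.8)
The plan is to reduce the lcK condition to the lcs condition for the fundamental forms, which have already been identified in the text as $\omega_J = d\alpha - \alpha\wedge\beta$ and $\omega_T = d\alpha + \alpha\wedge\beta$. Since the integrability of $J$ (resp.\ $T$) is assumed, the triple $(M,J,g)$ (resp.\ $(M,T,g)$) is a genuine Hermitian manifold, and by the definition given in Section~\ref{ss:lcK} it is lcK precisely when its fundamental form is lcs. Non-degeneracy of the fundamental form is automatic for any almost Hermitian structure, so the only thing left to verify is the lcs equation $d\omega = \omega\wedge\theta$ of~\eqref{eq:lcs} for a closed one-form $\theta$.

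The one structural fact that makes everything work is that for a contact pair of type $(h,0)$ the form $\beta$ is closed, so $d\beta = 0$, and of course $\beta\wedge\beta = 0$. With these two identities in hand the rest is a direct computation. For the $T$ case one may simply invoke Proposition~\ref{p:equiv}, which already shows that $d\alpha + \alpha\wedge\beta$ is an lcs form with Lee form $\theta = \beta$; alternatively one computes $d\omega_T = d\alpha\wedge\beta - \alpha\wedge d\beta = d\alpha\wedge\beta$ and checks that $\omega_T\wedge\beta = (d\alpha + \alpha\wedge\beta)\wedge\beta = d\alpha\wedge\beta$, so that $d\omega_T = \omega_T\wedge\beta$ and $(M,T,g)$ is lcK with Lee form $\beta$. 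For the $J$ case I would carry out the sign-flipped analogue: since $d\beta = 0$ we get $d\omega_J = -d\alpha\wedge\beta$, while $\omega_J\wedge(-\beta) = (d\alpha - \alpha\wedge\beta)\wedge(-\beta) = -d\alpha\wedge\beta$, the second term vanishing because $\beta\wedge\beta = 0$. Hence $d\omega_J = \omega_J\wedge(-\beta)$, so $\omega_J$ is lcs with closed Lee form $\theta = -\beta$, and $(M,J,g)$ is lcK.

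There is no serious obstacle here: the content of the proposition is the identification of the fundamental forms (already done before the statement) together with the observation that the closedness of $\beta$ forces the exterior derivative of each fundamental form to be itself wedged with $\pm\beta$. The only points one should be careful about are the conventions, namely that lcK reduces exactly to the lcs property of the fundamental form, that non-degeneracy comes for free from the compatibility of $g$ with the almost complex structure, and that the sign of the Lee form is tracked so as to match the convention $d\omega = \omega\wedge\theta$.
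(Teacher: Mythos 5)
Your proof is correct and follows essentially the same route as the paper: identify the fundamental forms as $d\alpha \mp \alpha\wedge\beta$, then verify the lcs equation with Lee form $\mp\beta$, either by invoking Proposition~\ref{p:equiv} (as the paper does) or by the equivalent one-line computation using $d\beta=0$ and $\beta\wedge\beta=0$. The sign bookkeeping matches the paper's convention ($\theta=-\beta$ for $J$, $\theta=\beta$ for $T$), so there is nothing to correct.
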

\begin{proof}
If $J$ is integrable, then $(M,J, g)$ is an Hermitian manifold and its fundamental form is $\omega=d\alpha-\alpha \wedge \beta$. 
By the proof of Proposition~\ref{p:equiv}, this is an lcs form with Lee form $-\beta$.
With the same argument we see that $(M,T, g)$ is lcK with Lee form $\beta$.
\end{proof}


Now we can characterize normal metric contact pairs in terms of Vaisman structures:
\begin{proposition}\label{p:equiv-normalcp-vaisman}
Let $M$ be a smooth manifold of dimension $2h+2$. There is a bijection (modulo constant rescaling of the metric) between the following two kinds of structures:
\begin{enumerate}
\item normal metric contact pairs $(\alpha, \beta , J,g)$ of type $(h,0)$, and
\item non-K\"ahler Vaisman structures $(J,g)$.
\end{enumerate}
\end{proposition}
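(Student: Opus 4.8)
The plan is to establish the bijection in both directions, using the structural results already assembled in this section and the preceding one. I would set up the two constructions explicitly and then verify they are mutually inverse up to the constant rescaling of the metric that normalizes $\vert\vert\theta\vert\vert$ to $1$.

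\textbf{From a normal metric contact pair to a Vaisman structure.} Given a normal metric contact pair $(\alpha,\beta,J,g)$ of type $(h,0)$, both $J$ and $T$ are integrable by definition, so Proposition~\ref{p:contact-pair-lck} immediately shows that $(M,T,g)$ is lcK with fundamental form $\omega=d\alpha+\alpha\wedge\beta$ and Lee form $\theta=\beta$. The remaining task is to promote this to a Vaisman structure. By Proposition~\ref{p:charact vaisman}, it suffices to check that $\vert\vert\theta\vert\vert$ is constant and that the vector field $U$ dual to $\beta$ is Killing. Since $\beta$ is $g$-dual to the Reeb vector field $B$, and $g=g_0\oplus\alpha^2\oplus\beta^2$ makes the splitting orthogonal with $B$ a unit vector, $\vert\vert\theta\vert\vert$ is constant (indeed $U=B$ after normalization). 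For the Killing condition I would invoke the normality criterion recalled after the definition of normal metric contact pair, namely that $L_BJ=0$ (equivalently $L_AJ=0$); combined with the facts that the Reeb action is totally geodesic for an associated metric and that $B$ is $g$-dual to the parallel-candidate form $\beta$, this forces $U=B$ to be Killing. It is then routine to verify that the structure is non-K\"ahler, since $\vert\vert\theta\vert\vert\ne 0$ means $\theta$ is a nonzero parallel form and $\omega$ is genuinely non-closed.

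\textbf{From a Vaisman structure to a normal metric contact pair.} Conversely, given a non-K\"ahler Vaisman structure $(J,g)$, normalize so that $\vert\vert\theta\vert\vert=1$ and set $\beta=\theta$, $\alpha=-J\beta$. Proposition~\ref{p:properties-vaisman-mfd} gives the key identity $d\alpha=\vert\vert\theta\vert\vert(\omega+\alpha\wedge\beta)=\omega+\alpha\wedge\beta$, which I would rearrange to $\omega=d\alpha-\alpha\wedge\beta$; Proposition~\ref{p:equiv} (applied to the lcs form $\omega$ with the infinitesimal automorphism $V$, using $L_Vg=0$ hence $L_V\omega=0$ and $\theta(V)=1$) then certifies that $(\alpha,\beta)$ is a contact pair of type $(h,0)$ with Reeb vector fields $A=U$ and $B=V$. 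To get the \emph{metric} contact pair, I would take as complex structure the $T$ of Proposition~\ref{p:Tint}, which is integrable with fundamental form $d\alpha-\alpha\wedge\beta=\omega$, so that $g$ is associated in the sense of Definition~\ref{d:associated metric} (with the roles of $J$ and $T$ interchanged relative to the first direction). Normality is then exactly the statement that both complex structures for the metric contact pair, which are the original $J$ and the $T$ of Proposition~\ref{p:Tint}, are integrable: the former by hypothesis, the latter by Proposition~\ref{p:Tint}.

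\textbf{Main obstacle and bookkeeping.} The hard part is the careful tracking of signs and of which complex structure plays which role, since Definition~\ref{d:associated metric} fixes $g(\cdot,J\cdot)=d\alpha-\alpha\wedge\beta$ while Proposition~\ref{p:contact-pair-lck} produces the lcK form $d\alpha+\alpha\wedge\beta$ for the conjugate structure $T$; a sign slip here would swap the Lee form $\beta$ for $-\beta$ and break the identification. I would also need to confirm that the two constructions are genuinely inverse: starting from a Vaisman structure, passing to the metric contact pair, and passing back must return $(J,g)$ up to the rescaling, and this follows because the distribution $\DD=\langle U,V\rangle$, the orthogonal splitting, and the restriction $J_0=J|_{\DD^\perp}$ are all recovered intrinsically from the data. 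The phrase ``modulo constant rescaling of the metric'' in the statement accounts precisely for the normalization $\vert\vert\theta\vert\vert=1$ used in the Vaisman direction, so I would remark that rescaling $g$ by a positive constant rescales $\beta$ and hence corresponds to the same conformal class on both sides.
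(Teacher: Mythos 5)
Your proposal follows the paper's proof essentially step for step: one direction via Proposition~\ref{p:contact-pair-lck}, the Killing property of the Reeb vector fields, and Proposition~\ref{p:charact vaisman}; the other via Propositions~\ref{p:properties-vaisman-mfd}, \ref{p:equiv} and~\ref{p:Tint}; the differences (working with $(M,T,g)$ and Lee form $+\beta$ rather than $(M,J,g)$, and the slight mislabelling of which structure has fundamental form $d\alpha-\alpha\wedge\beta$) are purely bookkeeping. The only soft spot is your justification that $B$ is Killing: the paper cites \cite[Theorem 4.1]{BH2} and \cite[Proposition 3.1]{BH3} for this, and the direct argument is that $L_B\alpha=L_B\beta=0$ gives $L_B\omega=0$, which combined with the normality condition $L_BJ=0$ yields $L_Bg=0$ --- the appeal to the totally geodesic Reeb action is not what does the work.
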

\begin{proof}
If $(\alpha, \beta , J,g)$ is a normal metric contact pair on $M$, then $(M,J, g)$ is lcK by Proposition \ref{p:contact-pair-lck}. With respect to $g$, the Reeb vector fields $A$ and $B$ have length $1$ and are dual to $\alpha$ and $\beta$ respectively. So in particular the Lee form $\beta$ has constant length. Moreover, the normality condition implies that the Reeb vector fields are Killing for $g$ (see \cite[Theorem 4.1]{BH2} and~\cite[Proposition 3.1]{BH3}). 
We can then conclude that the lcK structure is Vaisman by applying Proposition \ref{p:charact vaisman}. 

Conversely, let us suppose that $(M,J, g)$ is a non-K\"ahler Vaisman manifold. Then its Lee form $\theta$ has non-zero constant length and after 
a constant rescaling of the metric $g$, we may assume that $\vert\vert\theta\vert\vert=1$. Then, by Proposition~\ref{p:properties-vaisman-mfd}, 
the fundamental form of $(M,J, g)$ becomes $\omega= d\alpha-\alpha \wedge \beta$. With the same notation as in Proposition~\ref{p:properties-vaisman-mfd}, we have $U=B$ and $V=-JB=A$, where $A$ and $B$ are the anti-Lee and the Lee vector field respectively. Since the flow of $B$ preserves $\beta$ and the complex structure $J$, it also preserves $\alpha=-J \beta$ and hence $\omega$. This in turns gives:
\begin{equation*}
L_B \omega=0 \, , \, \quad \quad \beta (B)=1\, .
\end{equation*}
Then the pair $(\alpha, \beta)$ is a contact pair by Proposition~\ref{p:equiv}, with Reeb vector fields $A$ and $B$. 
Moreover the complex structure $J$ intertwines $A$ and $B$, by the definition of the anti-Lee vector field.

Thus $(\alpha, \beta , J,g)$  is a metric contact pair. It is normal since the corresponding $T$ is also integrable 
by Proposition~\ref{p:Tint}.
\end{proof}

This correspondence allows us to apply results about complex manifolds, in particular the extensive work on Vaisman manifolds,
to study normal metric contact pairs of type $(h,0)$. We saw in Section~\ref{s:rel} that contact pairs of this type give rise
to fibrations over the circle with contact monodromy. For normal metric pairs the conclusion can be strengthened by
the structure theorem for Vaisman manifolds proved by Ornea and Verbitsky~\cite{OV1}, to the effect that the fibers of 
the fibration over the circle are Sasakian, and the monodromy is an automorphism of the Sasakian structure. In particular it
is an isometry of the Sasakian metric on the fibers, and so some power is isotopic to the identity, since the isometry group
of the fiber has finitely many components. Thus:
\begin{corollary}
Every closed manifold with a normal metric contact pair of type $(h,0)$ is finitely covered by the product of a Sasakian manifold
with $S^1$.
\end{corollary}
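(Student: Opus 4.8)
The plan is to combine the correspondence with Vaisman structures from Proposition~\ref{p:equiv-normalcp-vaisman} with the structure theorem for compact Vaisman manifolds due to Ornea and Verbitsky~\cite{OV1}, and then to run a standard mapping-torus argument. First I would use Proposition~\ref{p:equiv-normalcp-vaisman} to replace the given normal metric contact pair $(\alpha,\beta,J,g)$ on the closed manifold $M$ by the associated non-K\"ahler Vaisman structure $(J,g)$, so that $M$ becomes a compact Vaisman manifold. Applying the Ornea--Verbitsky structure theorem then presents $M$ as the mapping torus of a compact Sasakian manifold $F$ whose monodromy $\phi$ is an automorphism of the Sasakian structure; in particular $\phi$ is an isometry of the Sasakian metric on $F$.

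Next I would exploit the compactness of $F$, which holds because $F$ is the fiber of a fibration of the closed manifold $M$ over the circle. The isometry group $\operatorname{Isom}(F)$ is then a compact Lie group, hence has only finitely many connected components, so some positive power $\phi^{n}$ lies in its identity component and is therefore isotopic to the identity. Since mapping tori of isotopic diffeomorphisms are diffeomorphic, the mapping torus of $\phi^{n}$ is diffeomorphic to $F\times S^{1}$. On the other hand, the mapping torus of $\phi^{n}$ is the $n$-fold cyclic cover of $M$ obtained by pulling back along the degree-$n$ self-covering of the base circle. Combining these two facts exhibits $F\times S^{1}$ as a finite cover of $M$, which is exactly the assertion.

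The only substantial input is the Ornea--Verbitsky structure theorem: it is what guarantees both that the fiber carries a genuine Sasakian structure and that the monodromy respects it, and hence is an isometry. Everything after that is formal. The points requiring a little care, which I would state explicitly rather than leave implicit, are the identification of the mapping torus of $\phi^{n}$ with the $n$-fold cyclic cover of $M$, and the passage from ``$\phi^{n}$ isotopic to the identity'' to ``mapping torus diffeomorphic to $F\times S^{1}$''; both are standard but are the two places where one actually uses, respectively, the covering-space structure of mapping tori and the invariance of the mapping-torus construction under isotopy of the monodromy.
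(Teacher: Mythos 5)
Your proposal is correct and follows essentially the same route as the paper: pass to the non-K\"ahler Vaisman structure via Proposition~\ref{p:equiv-normalcp-vaisman}, invoke the Ornea--Verbitsky structure theorem to get a Sasakian fiber with isometric monodromy, and use that the compact isometry group has finitely many components to replace the monodromy by one isotopic to the identity after passing to a finite cyclic cover. You merely spell out the mapping-torus/cyclic-cover bookkeeping more explicitly than the paper does, which is harmless.
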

This corollary implies that the only manifolds supporting normal metric contact pairs of type $(h,0)$ are the obvious ones.
However, if one drops the normality condition, then there are plenty of other examples, of course.

Even without using the structure theorem for Vaisman manifolds, we do get very strong conclusions. Proposition~\ref{p:Tint}
tells us that Vaisman manifolds admit two integrable complex structures inducing opposite orientations. This is not interesting
in odd complex dimensions, since for those dimensions the complex conjugate of a complex structure induces the 
orientation opposite to the original one. However, in even complex dimensions, this is a rather severe restriction, which,
in complex dimension two, was first considered by Beauville~\cite{Bea}, and later by the second author~\cite{K1,Ko}.

For a given manifold $X$ we denote by $\bar X$ the same manifold endowed with the opposite orientation. 
We quote the following result proved in~\cite{Ko} and refer to~\cite{BPV} for the classification of compact complex surfaces:
\begin{theorem}[\cite{Ko}]\label{t:class-Ko}
Let X be a compact complex surface admitting a complex structure for $\bar X$. Then X (and $\bar X$) satisfies one of the following:
\begin{enumerate}
\item X is geometrically ruled, or
\item the Chern numbers $c_1^2$ and $c_2$ of $X$ vanish, or 
\item X is uniformised by the polydisk.
\end{enumerate}
In particular, the signature of X vanishes.
\end{theorem}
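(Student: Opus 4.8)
The plan is to run through the Enriques--Kodaira classification, using at each stage two inputs: the behaviour of the numerical invariants under orientation reversal, and Seiberg--Witten theory. The elementary observations are that the Euler number $c_2$ and the parity of $b_1$ are orientation-independent, while the signature changes sign, $\sigma(\bar X)=-\sigma(X)$. In particular $X$ is of K\"ahler type (equivalently $b_1$ is even, by Miyaoka and Siu) if and only if $\bar X$ is, and the Chern numbers of the two surfaces are tied together by $c_1^2(X)=2c_2+3\sigma(X)$ and $c_1^2(\bar X)=2c_2-3\sigma(X)$. The final assertion that $\sigma=0$ will then be immediate from the three cases: geometrically ruled surfaces and surfaces with $c_1^2=c_2=0$ have $\sigma=0$ by direct computation, and a polydisk quotient satisfies the proportionality $c_1^2=2c_2$, hence also $\sigma=0$. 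So the substance is to show that these are the only possibilities.

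I would first dispose of Kodaira dimensions $-\infty$ and $0$ more or less by hand. For $\kappa(X)=0$ the minimal model has $c_1^2=0$ and signature in $\{0,-8,-16\}$: tori, bielliptic surfaces and the non-K\"ahler Kodaira surfaces have $c_2=0$ and $\sigma=0$, while Enriques surfaces ($c_2=12$, $\sigma=-8$) and K3 surfaces ($c_2=24$, $\sigma=-16$) have $\sigma\neq0$. Reversing orientation turns these negative signatures positive, and I would exclude the K3 and Enriques cases by observing that the underlying smooth four-manifold has a prescribed (essentially trivial) set of Seiberg--Witten basic classes, incompatible with $\bar X$ being a complex surface of the signature and Kodaira dimension that orientation reversal would force. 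This leaves only $c_1^2=c_2=0$, which is case (2). For $\kappa(X)=-\infty$ one uses that rationality and ruledness are detected among complex surfaces, together with the fact that blowing up strictly decreases $c_1^2$ and the signature, to conclude that the only surfaces whose orientation reverse is again complex are the geometrically ruled ones, giving case (1).

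The heart of the matter is Kodaira dimension $1$ and $2$ with $b^+>1$, where I would argue through Seiberg--Witten basic classes. For a minimal surface of general type Witten's computation identifies the basic classes with $\pm K_X$, where $K_X^2=c_1^2(X)$, and for a minimal properly elliptic surface they are supported on the fibre class with $c_1^2=0$. The key step is to show that the set of basic classes of the smooth four-manifold underlying $X$ agrees with the one underlying $\bar X$; granting this, matching $\{\pm K_X\}$ with $\{\pm K_{\bar X}\}$ forces $K_{\bar X}=\pm K_X$ in $H^2$, hence $c_1^2(X)=c_1^2(\bar X)$, and comparing the two expressions for the Chern numbers yields $\sigma(X)=0$. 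In the general type case this gives $c_1^2=2c_2$, and in the elliptic case it gives $c_2=0$, i.e. case (2) again.

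Two points are where the real work lies, and I expect the second to be the main obstacle. The first is the passage from $c_1^2=2c_2$ for a general type surface carrying complex structures of both orientations to the geometric conclusion that it is uniformised by the polydisk: here I would equip each of the two complex structures with its Aubin--Yau K\"ahler--Einstein metric of negative scalar curvature and use a holonomy/splitting argument to show that the two integrable structures of opposite orientation force the universal cover to be $\mathbb{H}\times\mathbb{H}$, which is case (3). The second, and harder, point is to make rigorous the comparison of Seiberg--Witten basic classes under orientation reversal, and to handle the surfaces with $b^+=1$ --- rational and ruled surfaces, Enriques surfaces, and properly elliptic surfaces with $p_g=0$ --- which fall outside the clean $b^+>1$ formalism and must be treated by chamber-dependent invariants or by direct appeal to the classification. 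Once these are settled, the vanishing of the signature is read off case by case.
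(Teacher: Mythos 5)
First, a point of calibration: the paper you are reading does not prove this statement at all --- it is quoted verbatim from \cite{Ko} --- so your attempt can only be measured against that source. Your overall architecture (Enriques--Kodaira classification, the identities $c_1^2(X)=2c_2+3\sigma$ and $c_1^2(\bar X)=2c_2-3\sigma$, gauge theory for Kodaira dimensions $1$ and $2$) has the right shape. But the step you yourself identify as the crux --- that the set of Seiberg--Witten basic classes of $X$ coincides with that of $\bar X$ --- is not merely hard to make rigorous: it is false, and an argument built on it proves too much. The Seiberg--Witten equations depend on the orientation through the self-duality operator, and there is no general relation between the invariants of a $4$-manifold and of its orientation reverse. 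Concretely, take $X=\Sigma_g\times\Sigma_h$ with $g,h\ge 2$, a polydisk quotient belonging to case (3). It admits an orientation-reversing self-diffeomorphism $r\times\mathrm{id}$, and both orientations carry minimal general type structures with $c_1^2=8(g-1)(h-1)>0$; the basic classes for the two orientations are $\pm\bigl(p_1^*K_{\Sigma_g}+p_2^*K_{\Sigma_h}\bigr)$ and $\pm\bigl(-p_1^*K_{\Sigma_g}+p_2^*K_{\Sigma_h}\bigr)$, which are genuinely different subsets of $H^2$. Had they coincided, $K_{\bar X}=\pm K_X$ would give $c_1^2(\bar X)=\langle K_{\bar X}^2,[\bar X]\rangle=-c_1^2(X)<0$, so your mechanism would empty out case (3) rather than establish it. The gauge-theoretic input that actually works uses each orientation \emph{separately}: Witten's non-vanishing for the canonical class of each complex structure feeds into metric estimates valid for \emph{every} Riemannian metric (of the type $\int_M s_g^2\,dV_g\ge 32\pi^2 c_1^2$), which one then evaluates on the K\"ahler--Einstein metric attached to the other orientation; this is what forces $c_1^2(X)=c_1^2(\bar X)$, hence $\sigma=0$, and whose equality case produces the polydisk. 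Your fallback for case (3) --- giving each complex structure its own K\"ahler--Einstein metric and ``using holonomy'' --- also does not close, because the two metrics have no reason to coincide, and $\sigma=0$ alone does not characterize polydisk quotients among surfaces of general type.

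A second genuine omission is that your treatment of $\kappa=-\infty$ covers only rational and ruled surfaces, i.e.\ the K\"ahler case. The class VII surfaces ($b_1=1$, $\kappa=-\infty$) must be addressed: Hopf and Inoue surfaces have $c_1^2=c_2=0$ and genuinely occur in case (2) (e.g.\ $S^1\times S^3$ admits orientation-reversing diffeomorphisms), while class VII surfaces with $b_2>0$ have $c_1^2=-b_2<0$ and have to be excluded; neither situation is reached by ``rationality and ruledness are detected among complex surfaces.'' The remaining bookkeeping is fine, though note that K3 and Enriques surfaces are excluded more cheaply than you propose: $\overline{K3}$ would have $c_1^2=96>3c_2=72$ and $\bar E$ would have $c_1^2=48>3c_2=36$, and since blowing down only worsens this, the Miyaoka--Yau inequality for a putative minimal model of $\bar X$ already gives a contradiction, with no Seiberg--Witten theory needed.
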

Note that the surfaces in the first and third cases are K\"ahler, so that non-K\"ahler Vaisman manifolds can only occur in
the second case. Further, only the first case can contain non-minimal surfaces.

\begin{corollary}[\cite{Ko}]\label{c:class-Ko}
Let X be a compact complex surface admitting a complex structure for $\bar X$ . If the Kodaira dimension of one of the two surfaces (equivalently, both of them) 
is non-negative, then $X$ carries a Thurston geometry compatible with the complex structure. The following surfaces and geometries can and do occur:
\begin{enumerate}
\item surfaces of general type with geometry $\bH \times \bH$, 
\item properly elliptic surfaces with even $b_1$ and with geometry $\bC \times \bH$,
\item properly elliptic surfaces with odd $b_1$ and with geometry $\widetilde{SL_2} \times \bR$,
\item tori and hyperelliptic surfaces with geometry $\bC^2$,
\item Kodaira surfaces with geometry $Nil^3\times \bR$ .
\end{enumerate}
\end{corollary}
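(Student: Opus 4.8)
The plan is to derive the corollary from Theorem~\ref{t:class-Ko} by combining it with the Enriques--Kodaira classification of minimal complex surfaces and the results of Wall~\cite{W1,W2} describing which surfaces carry a Thurston geometry compatible with their complex structure. The hypothesis that $X$ admits a complex structure for $\bar X$ places $X$ in one of the three cases of Theorem~\ref{t:class-Ko}, and the extra assumption $\kappa\geq 0$ on the Kodaira dimension will be used both to prune that list and to pin down the geometry in each surviving case.

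First I would eliminate case (1). Geometrically ruled surfaces, and all of their blow-ups, have Kodaira dimension $-\infty$, so the assumption $\kappa\geq 0$ excludes case (1) outright. Since, as noted after Theorem~\ref{t:class-Ko}, only the first case contains non-minimal surfaces, it follows that $X$ is minimal and falls under case (2) or case (3). Case (3) is then immediate: a surface uniformised by the polydisk is of general type and carries the product geometry $\bH\times\bH$ compatibly with its complex structure, giving item (i).

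The real content is case (2), where $c_1^2=c_2=0$. Here I would run through the minimal surfaces with these vanishing Chern numbers and $\kappa\geq 0$. A minimal surface of general type has $c_1^2>0$, so $\kappa=2$ cannot occur and $\kappa\in\{0,1\}$. For $\kappa=0$ the condition $c_2=e(X)=0$ rules out K3 and Enriques surfaces (whose Euler numbers are $24$ and $12$) and leaves exactly tori, hyperelliptic surfaces, and Kodaira surfaces. For $\kappa=1$ the surface is properly elliptic; since $c_2$ equals the sum of the non-negative local contributions of the singular fibres, $c_2=0$ forces the elliptic fibration to be a smooth fibre bundle, and such bundles divide according to the parity of $b_1$. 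Invoking Wall~\cite{W1,W2}, I would then attach to each type its compatible geometry: $\bC^2$ for tori and hyperelliptic surfaces, $Nil^3\times\bR$ for Kodaira surfaces, and $\bC\times\bH$ or $\widetilde{SL_2}\times\bR$ for the smooth elliptic bundles according as $b_1$ is even or odd. This yields items (ii)--(v).

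Finally, to see that each case \emph{does} occur I would exhibit standard models and verify that each admits a complex structure for the opposite orientation. For the product geometries one takes products $C_1\times C_2$ of curves---both of genus $\geq 2$ for $\bH\times\bH$, one elliptic and one of genus $\geq 2$ for $\bC\times\bH$, and both elliptic for $\bC^2$---and conjugates the complex structure on a single factor, which reverses the orientation; for $Nil^3\times\bR$ and $\widetilde{SL_2}\times\bR$ one uses primary Kodaira surfaces and suitable elliptic fibre bundles with odd $b_1$, relying on the two natural complex structures carried by the corresponding model geometries. I expect the main obstacle to lie in the bookkeeping of case (2): confirming that $c_2=0$ genuinely forces a smooth fibration in the properly elliptic case, correlating the parity of $b_1$ with the two distinct geometries $\bC\times\bH$ and $\widetilde{SL_2}\times\bR$, and checking in each instance that Wall's geometry is compatible with the given complex structure and not merely with the underlying smooth four-manifold.
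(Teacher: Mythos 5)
The paper does not prove this corollary at all: it is quoted verbatim from the reference \cite{Ko}, so there is no in-paper argument to compare against. Your reconstruction follows the route one would expect (and the route taken in \cite{Ko}): feed Theorem~\ref{t:class-Ko} into the Enriques--Kodaira classification to reduce to the minimal surfaces with $c_1^2=c_2=0$ and $\kappa\geq 0$, then invoke Wall's geometrisation results to attach the geometries, and exhibit models with complex structures for both orientations. The elimination of the ruled case, the minimality argument, the $\kappa=0$ bookkeeping via Euler numbers, and the existence constructions (conjugating one factor of a product, and the two invariant complex structures on $Nil^3\times\bR$ and $\widetilde{SL_2}\times\bR$) are all correct.

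One step is stated too strongly: $c_2=0$ does \emph{not} force a properly elliptic fibration to be a smooth fibre bundle. Multiple fibres with smooth reduction contribute zero to the Euler number, so $c_2=0$ only forces the fibration to be free of genuinely singular fibres; the surface is in general a Seifert-type elliptic fibration, finitely covered by (but not equal to) an elliptic bundle. You flag this yourself as the point needing confirmation, and indeed the fix is simply to apply Wall's theorem in the form he proves it: a relatively minimal properly elliptic surface with $c_2=0$ carries the geometry $\bC\times\bH$ or $\widetilde{SL_2}\times\bR$ according to the parity of $b_1$, multiple fibres allowed. With that correction the argument goes through and the stated list is exactly what survives.
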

Observe that a non-K\"ahler Vaisman surface is minimal, because it fibers over $S^1$, and so every embedded $2$-sphere must have 
zero self-intersection, since it is homotopic to a sphere contained in a fiber of the fibration over $S^1$.

Since a manifold endowed with a normal metric contact pair or a Vaisman structure 
carries two complex structures giving opposite orientations, 
we can apply Corollary~\ref{c:class-Ko} to obtain the following classification:
\begin{theorem}\label{t:class-normal-c-pairs}
Let $M$ be a compact complex surface endowed with a normal metric contact pair $(\alpha, \beta, J,g)$, or, equivalently, a non-K\"ahlerVaisman structure. 
If the Kodaira dimension of $M$  is non-negative then $M$ carries a Thurston geometry compatible with the complex structure and with the contact pair. 
The following surfaces and geometries can and do occur:
\begin{enumerate}
\item properly elliptic surfaces with odd $b_1$ and with geometry $\widetilde{SL_2} \times \bR$,
\item Kodaira surfaces with geometry $Nil^3\times \bR$.
\end{enumerate}
\end{theorem}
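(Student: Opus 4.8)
The plan is to use the two-orientation rigidity of Vaisman surfaces and then prune the list of Corollary~\ref{c:class-Ko} by means of the non-K\"ahler hypothesis. First I would pass, via Proposition~\ref{p:equiv-normalcp-vaisman}, from the normal metric contact pair to its associated non-K\"ahler Vaisman structure $(J,g)$, and invoke Proposition~\ref{p:Tint} to obtain the conjugate integrable complex structure $T$, which induces the opposite orientation. Thus $(M,J)$ admits a complex structure for $\bar M$; since the Kodaira dimension is non-negative, Corollary~\ref{c:class-Ko} applies and places $M$ in one of the five cases (a)--(e) listed there, each carrying a Thurston geometry compatible with the complex structure.

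The heart of the argument is to discard the three K\"ahler cases, namely (a) surfaces of general type, (b) properly elliptic surfaces with even $b_1$, and (d) tori and hyperelliptic surfaces---equivalently, the surfaces with even first Betti number. To do this I would show that a compact complex surface carrying a non-K\"ahler Vaisman structure is necessarily non-K\"ahler. By the corollary preceding the theorem (via the Ornea--Verbitsky structure theorem), $M$ is finitely covered by $N\times S^1$ with $N$ a compact Sasakian three-manifold. Since the odd Betti numbers of a compact Sasakian manifold below the middle dimension are even, $b_1(N)$ is even, so $b_1(N\times S^1)=b_1(N)+1$ is odd and the product is a non-K\"ahler surface. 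Were $(M,J)$ K\"ahler, the finite covering $N\times S^1$ would inherit a K\"ahler metric by pullback---a contradiction. Hence $(M,J)$ has odd $b_1$ and cannot be any of (a), (b), (d). Only cases (c) and (e) survive, and these are exactly items (1) and (2) of the statement.

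For the claim that both geometries actually occur, I would exhibit explicit Vaisman models: properly elliptic surfaces with odd $b_1$ realising the $\widetilde{SL_2}\times\bR$ geometry, and Kodaira surfaces realising the $Nil^3\times\bR$ geometry. On these (locally) homogeneous surfaces the metric $g$, the complex structures $J$ and $T$, and the one-forms $\alpha$ and $\beta$ are all invariant under the model group, so the Thurston geometry is compatible not merely with the complex structure but with the whole contact pair $(\alpha,\beta,J,g)$, which is the refinement the statement demands beyond Corollary~\ref{c:class-Ko}. The step I expect to be the main obstacle is precisely the elimination of the K\"ahler cases: the passage through the bijections and the invocation of Corollary~\ref{c:class-Ko} are formal, whereas ruling out (a), (b) and (d) is where the non-K\"ahler hypothesis must genuinely be brought to bear, through the parity of $b_1$.
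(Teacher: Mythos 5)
Your proposal is correct and its skeleton --- pass to the non-K\"ahler Vaisman structure via Proposition~\ref{p:equiv-normalcp-vaisman}, get the second complex structure from Proposition~\ref{p:Tint}, apply Corollary~\ref{c:class-Ko}, eliminate the K\"ahler cases, and exhibit invariant models --- matches the paper's. The one step where you take a genuinely different route is the elimination of cases (1), (2), (4) of Corollary~\ref{c:class-Ko}. The paper disposes of these in one line: those surfaces have even $b_1$, hence are K\"ahler by Buchdahl, and by Vaisman's theorem~\cite{V} a compact complex manifold admitting a K\"ahler metric carries no lcK structure that is not globally conformally K\"ahler --- which a non-K\"ahler Vaisman structure never is, its Lee form being parallel, non-zero, hence harmonic and non-exact. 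You instead go through the Ornea--Verbitsky structure theorem (the unnumbered Corollary preceding the theorem) to get a finite cover $N\times S^1$ with $N$ a compact Sasakian $3$-manifold, use the evenness of $b_1$ of such an $N$ to see the cover has odd $b_1$, and conclude $M$ itself is non-K\"ahler since a K\"ahler form would pull back to the cover. This is valid and only uses the elementary direction of Hodge theory on the cover, but it leans on the heavier structure theorem where the paper's argument needs only Vaisman's much older result; both ultimately invoke Buchdahl to identify the even-$b_1$ cases as the K\"ahler ones. For the existence half you correctly identify what has to be done --- invariant contact pairs on the models $\widetilde{SL_2}\times\bR$ and $Nil^3\times\bR$, compatible with the invariant complex structure and normal --- but you leave the construction as a promise, whereas the paper carries it out explicitly from the structure equations, taking $(\omega_3,\omega_4)$ as the pair and verifying compatibility of $J$ with $d\omega_3-\omega_3\wedge\omega_4$, that $J$ intertwines the Reeb fields $e_3,e_4$, and that $L_{e_4}J=0$. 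That computation, not the exclusion step, is where the bulk of the paper's proof is actually spent, so your assessment of the main obstacle is inverted, though this does not affect correctness.
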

\begin{proof}
It is a result of Vaisman \cite{V} that, if a manifold endowed with a lcK structure admits a K\"ahler metric, then the lcK structure is K\"ahler, after rescaling the metric. 
By Proposition~\ref{p:equiv-normalcp-vaisman}, manifolds endowed with normal contact pairs are non-K\"ahler Vaisman manifolds. In the list of Corollary~\ref{c:class-Ko}, 
the surfaces in cases (1),(2),(4) can be excluded because they have even first Betti number and are therefore K\"ahler; see~\cite{Buch}. To prove that the remaining cases 
effectively occur, it is enough to prove the existence of a contact pair $(\alpha, \beta)$ on the geometric model, which is invariant by the isometry group and such that the 
complex structure on the model is compatible with $d\alpha -\alpha \land \beta$, intertwines the Reeb vector fields, and is preserved by the flow of one of them (cf.~\cite{BH3}). 
We discuss each case using the description given in~\cite{W1}, writing $e_1,e_2,e_3,e_4$ for a basis of the tangent space at the identity and $\omega_1,\omega_2,\omega_3,\omega_4$ 
for its dual basis.

The maximal connected isometry group of $\widetilde{SL_2} \times \bR$ is a semidirect product of $\widetilde{SL_2} \times \bR$, acting on itself by left translations, and a circle. 
The structure equations of $\widetilde{SL_2} \times \bR$ are given by
\begin{alignat*}{2}
&d\omega_1=\omega_2\wedge \omega_3 \, , \, &&d\omega_2=- \omega_1 \wedge \omega_3 \, ,\\
&d\omega_3=-\omega_1 \wedge \omega_2 \, , \, \quad \quad &&d\omega_4=0 \, .
\end{alignat*}
The circle acts by isometries, as described in \cite{W1}, with rotations on the plane $(\omega_1,\omega_2)$. The complex structure, which is compatible with the action of the circle, is defined by $Je_4=-e_3$ and $Je_1=e_2$. A left invariant contact pair is given by $(\omega_3,\omega_4)$ and its Reeb vector fields are $e_3$, $e_4$. An easy computation shows that $J$ is compatible with $d\omega_3- \omega_3 \land\omega_4$. The Reeb vector fields are intertwined by $J$ and $L_{e_4}J=0$. Thus the contact pair is normal. This contact pair is also preserved by the circle action and it descends to all quotients by cocompact lattices.

Now consider $Nil^3\times \bR$. Its maximal connected isometry group is a semidirect product of $Nil^3\times \bR$, again acting on itself by left translations, and a circle. The structure equations of $Nil^3\times \bR$ are
\begin{equation*}
d\omega_1=d\omega_2=d\omega_4=0 \, ,\, d\omega_3=-\omega_1 \wedge \omega_2 \, .
\end{equation*}
The complex structure is given by $Je_1=e_2$ and $Je_3=e_4$. Again the circle acts by rotations on the plane $(\omega_1 ,\omega_2)$. The pair $(\omega_3, \omega_4)$ gives rise to an invariant contact pair, preserved by the circle action, which descends to all quotients by cocompact lattices. The complex structure $J$ is compatible with $d\omega_3- \omega_3 \land\omega_4$. Since the Reeb vector fields $e_3$, $e_4$. are intertwined by $J$, and $L_{e_4}J=0$, the contact pair is normal.
\end{proof}

In the case of negative Kodaira dimension, the results of~\cite{Ko} do not give a classification of Vaisman manifolds. However, this classification was achieved
by Belgun~\cite{Bel}, who proved the following:
\begin{theorem}
[\cite{Bel}]
A compact complex surface of negative Kodaira dimension is Vaisman if and only if it is a Hopf surface carrying the Thurston geometry $S^3\times\R$.
\end{theorem}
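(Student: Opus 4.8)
The plan is to prove the two implications separately, using the correspondence between Vaisman structures and normal metric contact pairs (Proposition~\ref{p:equiv-normalcp-vaisman}) as the main bridge, exactly as in the proof of Theorem~\ref{t:class-normal-c-pairs}. For the \emph{if} direction I would exhibit an explicit normal metric contact pair on the geometric model $S^3\times\R$ and appeal to Proposition~\ref{p:equiv-normalcp-vaisman} to produce the Vaisman structure, paralleling the computations already carried out for $\widetilde{SL_2}\times\bR$ and $Nil^3\times\bR$. Writing $\omega_1,\omega_2,\omega_3$ for the left-invariant coframe on $S^3=SU(2)$ with structure equations $d\omega_1=\omega_2\wedge\omega_3$, $d\omega_2=\omega_3\wedge\omega_1$, $d\omega_3=\omega_1\wedge\omega_2$, and $\omega_4$ for the closed generator of the $\R$-factor, the pair $(\omega_3,\omega_4)$ is a left-invariant contact pair of type $(1,0)$ with Reeb vector fields $e_3,e_4$. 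One checks that the invariant complex structure with $Je_1=e_2$, $Je_3=e_4$ is compatible with $d\omega_3-\omega_3\wedge\omega_4$, intertwines the Reeb fields, and satisfies $L_{e_4}J=0$, so the pair is normal; being invariant under the isometry group it descends to every compact quotient, that is, to every Hopf surface carrying this geometry, which is thereby Vaisman.

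For the \emph{only if} direction, let $M$ be a compact complex surface of negative Kodaira dimension admitting a Vaisman structure. Since a compact Vaisman manifold has odd first Betti number, $M$ is non-K\"ahler, so by Proposition~\ref{p:equiv-normalcp-vaisman} it carries a normal metric contact pair, and by Proposition~\ref{p:Tint} it admits complex structures inducing both orientations. Theorem~\ref{t:class-Ko} then forces $M$ into one of its three cases; since the surfaces in cases (1) and (3) are K\"ahler, we must be in case (2), so that $c_1^2=c_2=0$. Combined with negative Kodaira dimension and odd $b_1$, the Enriques--Kodaira classification identifies $M$ as a minimal surface of class $\mathrm{VII}$ with $b_2=0$.

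To pin down the geometry I would invoke the structure corollary above, a consequence of the Ornea--Verbitsky theorem~\cite{OV1}: $M$ is finitely covered by $N^3\times S^1$ with $N^3$ a compact Sasakian $3$-manifold, which therefore carries one of the three contact geometries $S^3$, $\widetilde{SL_2}$ or $Nil^3$, so that $M$ carries one of $S^3\times\R$, $\widetilde{SL_2}\times\bR$ or $Nil^3\times\bR$. By Wall's classification~\cite{W1,W2} the latter two geometries occur only on properly elliptic surfaces and on Kodaira surfaces respectively, both of non-negative Kodaira dimension, and so are excluded by hypothesis, exactly as in Theorem~\ref{t:class-normal-c-pairs}. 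Hence $M$ carries the geometry $S^3\times\R$, and by Wall's list the compact complex surfaces with this geometry are precisely the Hopf surfaces, which indeed have negative Kodaira dimension; this completes the identification.

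The main obstacle is the passage from the Vaisman condition to an actual Thurston geometry on $M$, together with the correct identification of that geometry. The finite-cover corollary (resting on the Ornea--Verbitsky structure theorem) supplies the product form $N^3\times S^1$ and its Sasakian factor, but one still has to (i) descend the product geometry from the finite cover to $M$ itself and (ii) rule out the competing geometries $\widetilde{SL_2}\times\bR$ and $Nil^3\times\bR$. Step (ii) is exactly where the Kodaira-dimension hypothesis enters through Wall's classification, while step (i) requires checking that the monodromy of the fibration over $S^1$ is a finite-order isometry of the Sasakian factor; this last point is also what excludes the solvable-geometry (Inoue) surfaces from the remaining $b_2=0$ class $\mathrm{VII}$ case, and is the delicate part of the argument.
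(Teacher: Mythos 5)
This statement is not proved in the paper at all: it is quoted from Belgun~\cite{Bel}, and the sentence immediately preceding it says explicitly that the orientation-reversal method behind Theorem~\ref{t:class-Ko} does \emph{not} yield a classification of Vaisman surfaces in negative Kodaira dimension. Your proposal is therefore an attempt at an independent proof, and it has an essential gap in the ``only if'' direction, beyond the descent issue you yourself flag.

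Your chain of reductions (odd $b_1$, Proposition~\ref{p:equiv-normalcp-vaisman}, Theorem~\ref{t:class-Ko}, $c_1^2=c_2=0$, the finite cover $N^3\times S^1$ with $N$ Sasakian) controls at best the \emph{diffeomorphism type} of $M$ and of its finite cover. But the conclusion ``$M$ is a Hopf surface carrying the geometry $S^3\times\R$'' is a statement about the \emph{complex structure}: all primary Hopf surfaces, including the non-diagonal ones $(z_1,z_2)\mapsto(\lambda_1 z_1+tz_2^m,\lambda_2 z_2)$ with $t\neq 0$, are diffeomorphic to $S^3\times S^1$, yet by Belgun's own analysis these are lcK but \emph{not} Vaisman and do not carry the geometry compatibly with their complex structure. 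So no argument that only pins down the smooth type of $M$ or its cover can separate the Vaisman Hopf surfaces from the others. What is actually required is a classification of the Sasakian structures on the $3$-dimensional fiber --- these are not only the round structure on $S^3$, there are the deformed (weighted) Sasakian structures --- together with an identification of which complex structures their mapping tori produce; that analysis is the substance of Belgun's proof and is absent from your outline. You also use without justification the classification of compact Sasakian $3$-manifolds by their three contact geometries (a theorem of Geiges, reproved by Belgun), and you leave open the descent of the product geometry from the finite cover to $M$ itself. The ``if'' direction is essentially sound and does parallel the computations in Theorem~\ref{t:class-normal-c-pairs}, though the invariance of $(\omega_3,\omega_4)$ must be checked not just for left translations but for the whole subgroup of the isometry group (containing $U(2)\times\R$) needed to realize, via Wall~\cite{W2}, every Hopf surface carrying the geometry.
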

The result is not formulated in this way in~\cite{Bel}, however the formulation given there is seen to be equivalent to the one above if one keeps in mind 
Wall's characterisation of manifolds with geometry $S^3\times\R$; see~\cite{W2}. Finally, note that Belgun~\cite{Bel} classified all Vaisman manifolds of complex
dimension $2$ independently of the results in~\cite{Ko}, thereby giving a different proof of Theorem~\ref{t:class-normal-c-pairs} above.

\section{Further Applications}

\subsection{Lcs versus lcK manifolds}

Since a K\"ahler structure consists of a pair of compatible complex and symplectic structures,
there are three distinct ways in which a manifold having at least one of these structures can fail to be K\"ahler:
being symplectic but not having any complex structure at all, being complex and not having a symplectic structure, 
or, most interestingly, having both, but no compatible pair. There are examples of all three kinds in the 
lowest possible dimension equal to four. Firstly, there are many symplectic four-manifolds without complex
structures, for example by the constructions of Gompf~\cite{Go}. Secondly, there are complex surfaces, such as 
Hopf surfaces, which are not even cohomologically symplectic. Finally, there are examples like the Kodaira--Thurston
manifold, which are both symplectic and complex, but cannot be K\"ahler for cohomological reasons.

In a similar vein, an lcK structure consists of a pair of compatible complex and lcs structures, and we now want
to find examples having at least one of these structures but no lcK structure.
Our first example answers the question posed by Ornea and Verbitsky as ``Open Problem 1'' in~\cite{OV2}.
\begin{example}
By our discussion in Section~\ref{s:rel}, the product of any contact manifold with the circle is lcs.
In particular, $M^3\times S^1$ is lcs for any orientable $3$-manifold $M$, since any such $M$ is contact by
classical results of Lutz and Martinet. Note that only for very special $M$ (those which fiber over the circle),
can $M^3\times S^1$ be genuinely symplectic~\cite{FV}. Moreover, for most choices of $M$, this product has
no complex structure. For example, if $M$ is hyperbolic, then $M^3\times S^1$ cannot be complex by~\cite[Example 3.7]{KK}.
Thus there exist lcs manifolds without complex (or symplectic) structures. In particular, these manifolds are not lcK.
\end{example}

We cannot give such examples in higher dimensions, because every lcs manifold is almost complex, and in higher 
dimensions there are no known obstructions for almost complex manifolds to admit complex structures.

\begin{example}
Consider $S^{2n+1}\times S^{2l+1}$ with $n, l \geq 1$. These manifolds carry the Calabi--Eckmann
complex structures, but they cannot be lcs for cohomological reasons. Their first Betti numbers vanish, so the Lee form 
of any lcs structure would be exact, and so an lcs structure would be globally conformally symplectic. However, 
since the second Betti numbers also vanish, these manifolds are not symplectic.
\end{example}
The real dimensions of these examples are $\geq 6$, and it is very likely that no such examples exist in dimension $4$:
\begin{conjecture}\label{conj}
Any closed smooth four-manifold that admits a complex structure also admits an lcK structure (not necessarily
compatible with the given complex structure).
\end{conjecture}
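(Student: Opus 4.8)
The plan is to proceed through the Enriques--Kodaira classification of compact complex surfaces, organized by Kodaira dimension and by the parity of the first Betti number. Since a K\"ahler metric is in particular lcK, with vanishing Lee form, every surface with even $b_1$ is immediately accounted for, and the whole difficulty is concentrated in the non-K\"ahler surfaces, those with odd $b_1$. So after the trivial K\"ahler case I would only have to produce, on the underlying smooth four-manifold of each non-K\"ahler surface, some complex structure carrying a compatible lcs fundamental form.

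First I would dispose of the non-K\"ahler surfaces of non-negative Kodaira dimension. By Wall's classification of complex surfaces admitting a Thurston geometry, the non-K\"ahler surfaces of Kodaira dimension $0$ are the Kodaira surfaces, with geometry $Nil^3\times\bR$, and those of Kodaira dimension $1$ are the properly elliptic surfaces with odd $b_1$, with geometry $\widetilde{SL_2}\times\bR$. Both geometries occur in the list of Theorem~\ref{t:class-normal-c-pairs}, where they are shown to carry normal metric contact pairs, hence Vaisman and in particular lcK structures. Surfaces of general type are projective, hence K\"ahler, so Kodaira dimension $2$ contributes nothing new. This settles the conjecture for every surface except those of Kodaira dimension $-\infty$ with odd $b_1$, that is, the surfaces of class VII.

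For class VII I would split according to the second Betti number. When $b_2=0$ the minimal models are, by the work of Bogomolov, Li--Yau--Zheng and Teleman, precisely the Hopf and Inoue surfaces. All Hopf surfaces are lcK by Gauduchon--Ornea, the diagonal ones being Vaisman with geometry $S^3\times\bR$, and the Inoue surfaces of type $S_M$ are lcK via Tricerri's metric. The Inoue surfaces of type $S^{\pm}$ are more delicate: Belgun showed that within their deformation families only a proper subset carries an lcK metric. Here the freedom built into the conjecture becomes essential---the underlying smooth manifold is a fixed solvmanifold, and by Ehresmann's theorem every member of a deformation family shares that diffeomorphism type, so it is enough to select within the family one complex structure admitting an lcK metric, exactly what Belgun's analysis provides. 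Thus the conjecture would hold throughout class VII with $b_2=0$.

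The main obstacle, and the reason the statement remains conjectural, is the class VII surfaces with $b_2>0$. These are expected, by the global spherical shell conjecture (verified by Teleman only for small $b_2$), to be Kato surfaces; they contain rational curves, so they admit no Vaisman metric and the contact-pair machinery of this paper does not apply. A few subfamilies, such as parabolic Inoue and Enoki surfaces, are known to be lcK, but for a general minimal class VII surface with $b_2>0$ the existence of an lcK metric---on the given complex structure, or on any other complex structure supported by the same smooth four-manifold---is open. A complete proof would therefore demand either an lcK construction adapted to global spherical shells or a geometry-free argument carried out directly on the underlying smooth manifold, and this is where I expect the real difficulty to lie.
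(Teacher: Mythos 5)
This statement is a \emph{conjecture}, and the paper offers no proof of it---only the surrounding discussion of which cases are known. Your ``proposal'' is accordingly not a proof either, and you are right to say so explicitly; what you have written is essentially the same case analysis the authors themselves sketch. The reduction to non-K\"ahler (odd $b_1$) surfaces, the treatment of Kodaira and properly elliptic surfaces via the Vaisman/normal-contact-pair structures of Theorem~\ref{t:class-normal-c-pairs}, the Hopf surfaces, and the crucial use of the deformation freedom in the conjecture to absorb Belgun's non-lcK Inoue surfaces of type $S^{\pm}$ all track the paper's own remarks. The one place where you are slightly behind the paper is the class VII, $b_2>0$ case: the paper cites Brunella's result that \emph{all} Kato surfaces (not just the parabolic Inoue and Enoki subfamilies you mention) admit lcK metrics, so the entire residual gap is the global spherical shell conjecture itself, i.e.\ whether every minimal class VII surface with $b_2>0$ is a Kato surface---exactly the ``standard conjectures'' of Nakamura and Teleman that the authors say would imply their conjecture. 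So your assessment of where the difficulty lies is correct, but the needed lcK construction on Kato surfaces is already available; what is missing is purely the classification statement.
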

The stronger statement that all complex structures on complex surfaces should have compatible lcK metrics is false by an example 
due to Belgun~\cite{Bel}. He found that certain Inoue surfaces admit no lcK structure compatible with the given 
complex structure. However, after deforming the complex structure, these surfaces do become lcK, and so they are 
not counterexamples to our weaker conjecture. The conjecture holds for all known examples of complex surfaces,
by combining the results of Belgun~\cite{Bel} (and work of other authors mentioned in~\cite{Bel}) and Brunella~\cite{Bru}, 
since by these results all known compact
complex surfaces are lcK, except for the Inoue surfaces, where the statement is true only up to deformation. The standard 
conjectures~\cite{Nak,Tel} about the classification of surfaces of class VII imply our conjecture.

\begin{remark}
Brunella~\cite{Bru} raised the question whether the universal covering of every compact complex surface is K\"ahler.
This is true for all known surfaces by~\cite{Bel,Bru}, since the only non-lcK ones are certain Inoue surfaces 
with universal covering $\bC\times\bH$. The standard conjectures~\cite{Nak,Tel} about the classification of surfaces of 
class VII imply a positive answer to Brunella's question.
\end{remark}

Finally we want to give examples of manifolds which are complex and lcs, but are not lcK. Recall that the 
Kodaira--Thurston manifold is complex and symplectic, but not K\"ahler. However, it is lcK.
\begin{example}
Let $\Gamma$ be a finitely presentable group with $b_1(\Gamma)=0$ which is not the fundamental group of any compact
K\"ahler manifold. Such groups exist, for example one may take the fundamental group of a hyperbolic homology sphere,
cf.~\cite{ABCKT}. By a result of Gompf~\cite{Go} one can find a closed symplectic $4$-manifold $X$ with $\pi_1(X)=\Gamma$.
For any $k>0$, the $k$-fold blowup $X_k$ of $X$ is still symplectic with the same fundamental group. As soon as $k$ is 
large enough, the twistor space $Z$ of $X_k$ admits a complex structure by a result of Taubes~\cite{Taubes}. The 
twistor space is a $2$-sphere bundle over $X_k$, and so has the same fundamental group $\Gamma$. Moreover, by 
the Thurston construction, the twistor space is symplectic since $X_k$ is. Thus $Z$ has complex and symplectic structures.
However, it cannot be lcK. The reason is that the vanishing of the first Betti number implies that any lcK structure would be 
globally conformally K\"ahler, contradicting the assumption made about $\Gamma$.
\end{example}
It would be interesting to have such examples which are complex and lcs without being genuinely symplectic. Such 
examples are difficult to pin down because Conjecture~\ref{conj} implies that one cannot find complex non-lcK four-manifolds, 
and in higher dimensions we have no arguments yet to rule out the existence of lcK structures on complex manifolds, other than 
the one used above, which reduces to the K\"ahler case using the assumption $b_1=0$. (Compare the Postscript to~\cite{KK}.)
Of course this assumption makes every lcs structure globally conformally symplectic. Nevertheless,
there is an important difference between the generalizations from symplectic to lcs on the one hand, and from K\"ahler to lcK
on the other. Whereas symplectic and non-globally conformally symplectic lcs structures can both exist on the 
same smooth manifold (for example the Kodaira--Thurston manifold), K\"ahler and non-globally conformally K\"ahler lcK structures
never exist on the same complex manifold by a result of Vaisman~\cite{V}. 

\subsection{Normal contact pairs versus K\"ahler pairs}

In \cite{BK} we introduced the notion of symplectic pairs on even dimensional manifolds. We recall the definition in the four dimensional case:
\begin{definition}
A symplectic pair on a $4$-dimensional manifold is a pair of closed two-forms $(\omega_1, \omega_2)$ such that $\omega_1 \land\omega_2$ is a volume form and $(\omega_1)^2=(\omega_2)^2=0$.
\end{definition}
In particular $\omega_1 \pm \omega_2$ are symplectic forms and give opposite orientations.

One can translate to symplectic pairs the condition of normality given for contact pairs. This goes as follows: the tangent bundle of a manifold endowed with a symplectic pair splits into a direct sum of rank $2$ symplectic bundles and each of them can be endowed with an almost complex structure, say $J_1$ and $J_2$ and compatible metrics $g_1$, $g_2$. Then we obtain two almost complex structures $J=J_1\oplus J_2$ and $T=J_1\oplus (-J_2)$. The metric $g=g_1 \oplus g_2$ is compatible with both $J$ and $T$ and its fundamental forms are $\omega_1 +\omega_2$ and $\omega_1 -\omega_2$ respectively. We say that the symplectic pair is normal if both $J$ and $T$ are integrable. This in turn implies that $\omega_1 +\omega_2$ and $\omega_1 -\omega_2$ are K\"ahler. 
This structure has been studied in \cite{G} and is called {\it K\"ahler pair}.

As in the case of contact pairs, $J$ and $T$ give opposite orientations. Then we can apply Corollary \ref{c:class-Ko} and obtain:
\begin{theorem}\label{t:class-K-pairs-posK}
Let $M$ be a compact complex surface endowed with a K\"ahler pair. If the Kodaira dimension  is non-negative then $M$ is one of the following:
\begin{enumerate}
\item surfaces of general type with geometry $\bH \times \bH$, 
\item properly elliptic surfaces with even $b_1$ and geometry $\bC \times \bH$,
\item tori and hyperelliptic surfaces ith geometry $\bC \times \bC$.
\end{enumerate}
\end{theorem}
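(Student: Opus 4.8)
The plan is to combine Corollary~\ref{c:class-Ko} with the elementary fact that a compact K\"ahler surface has even first Betti number. First I would note that a K\"ahler pair equips $M$ with two integrable complex structures $J$ and $T$ that induce opposite orientations, as recorded in the discussion preceding the statement. Thus $M$, taken with $J$, is a compact complex surface whose orientation reversal $\bar M$ also carries a complex structure, namely $T$. Since the Kodaira dimension is assumed non-negative, Corollary~\ref{c:class-Ko} applies and confines $M$ to the five geometries listed there.

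The decisive step is to discard two of those five cases. By the definition of a K\"ahler pair the forms $\omega_1+\omega_2$ and $\omega_1-\omega_2$ are K\"ahler, so $M$ is a compact K\"ahler surface and therefore has even $b_1$. The two geometries $\widetilde{SL_2}\times\bR$ (properly elliptic surfaces with odd $b_1$) and $Nil^3\times\bR$ (Kodaira surfaces) are precisely the odd-$b_1$ entries of the corollary's list, so both are ruled out. What survives is exactly cases (1), (2) and (4) of the corollary, which are the three cases of the statement, and this completes the proof. The argument runs parallel to the proof of Theorem~\ref{t:class-normal-c-pairs}, but with the role of the parity of $b_1$ reversed: there the non-K\"ahler Vaisman hypothesis forces $b_1$ to be odd and eliminates the even-$b_1$ geometries, whereas here the K\"ahler-pair hypothesis forces $b_1$ to be even and eliminates the odd-$b_1$ ones.

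I expect no serious obstacle: the whole argument is the short elimination just described, and the applicability of Corollary~\ref{c:class-Ko} rests only on the opposite-orientation property of $J$ and $T$, which is already established in the text. Should one wish to complement this upper bound with realizability, as in the companion theorem, it suffices to observe that each surviving model $\bH\times\bH$, $\bC\times\bH$, $\bC^2=\bC\times\bC$ is a product of two Riemann surfaces: the pullbacks $\omega_1,\omega_2$ of the two factor area forms constitute a symplectic pair invariant under the product isometry group, hence descending to all cocompact quotients, while the product complex structure together with its factorwise conjugate renders the pair K\"ahler.
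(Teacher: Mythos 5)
Your argument is correct and follows the paper's own proof: the two K\"ahler forms $\omega_1\pm\omega_2$ make $M$ a compact K\"ahler surface, which eliminates the odd-$b_1$ cases of Corollary~\ref{c:class-Ko}, and the product models realize the remaining three geometries. One small caveat on your realizability remark: the pair $(\omega_1,\omega_2)$ is invariant only under the product of the factor isometry groups, not under the full maximal connected isometry group of the model (e.g.\ for $\bC\times\bC=\bC^2$), so it descends only to quotients by lattices preserving the local product structure and the orientation of each factor --- which, as the paper points out, does hold for instance for all tori and hyperelliptic surfaces.
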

\begin{proof}
Since a K\"ahler pair gives rise to two K\"ahler forms, the only possible cases are those listed in the theorem.

It is clear that each model carries a K\"ahler pair. Indeed the symplectic pair is given by the obvious K\"ahler forms on the factors and the  complex structures  are $J=J_1 \oplus J_2$ and $T=J_1\oplus (-J_2)$, 
where $ J_1$, $J_2$ are the complex structures on the factors. This implies for example, that the hyperelliptic surfaces carry a K\"ahler pair compatible with the geometry, because they are all quotients of $\bC \times \bC$ by isometries which preserve the local product structure and the orientation on each factor. Nevertheless, in general the K\"ahler pairs on the geometric model are not invariant under the action of the full maximal connected isometry group, but there are smaller subgroups which preserve them (cf. \cite[Examples 8,9,10]{BK}). 
\end{proof}

\bigskip

\bibliographystyle{amsalpha}

\begin{thebibliography}{ABCKT}

\bibitem[ABCKT]{ABCKT}
J.~Amor\'os, M.~Burger, K.~Corlette, D.~Kotschick, D.~Toledo, {\sl Fundamental Groups of Compact K\"ahler 
Manifolds}, Amer.~Math.~Soc., Providence, R.I.~1996.


%

\bibitem[Ban]{Bande1} 
G.~Bande, {\sl Formes de contact g\'en\'eralis\'e, couples de contact et couples contacto-symplectiques}, Th\`ese de Doctorat, Universit\'e de Haute Alsace, Mulhouse 2000.


\bibitem[BGK]{BGK}
G.~Bande, P.~Ghiggini, D.~Kotschick, {\it Stability theorems for
symplectic and contact pairs}, Int.~Math.~Res.~Not. {\bf 2004:68} (2004), 3673--3688.

\bibitem[BH1]{BH1}
G.~Bande, A.~Hadjar, {\it Contact pairs}, Tohoku Math.~J.~{\bf 57} (2005), 247--260.

\bibitem[BH2]{BH2}
G.~Bande, A.~Hadjar, {\it Contact pair structures and associated metrics}, in {\sl Differential Geometry - Proceedings of the VIII International Colloquium}, 
World Sci.~Publ.~2009, 266--275.

\bibitem[BH3]{BH3}
G.~Bande, A.~Hadjar, {\it On normal contact pairs}, Inter.~J.~Math.~(to appear).


\bibitem[BK]{BK}
G.~Bande, D.~Kotschick, {\it The geometry of  symplectic pairs}, Trans.~Amer.~Math.~Soc. {\bf 358} (2006), 1643--1655.

\bibitem[BPV]{BPV}
W.~Barth, C.~Peters, A.~Van~de~Ven, {\sl Compact Complex Surfaces},
Springer-Verlag, Berlin 1984.


\bibitem[Bea]{Bea}
A.~Beauville, {\it Surfaces complexes et orientation}, in 
{\sl Geometry of $K3$ surfaces: moduli and periods} (Palaiseau, 1981/1982). 
AstŽrisque No.~{\bf 126} (1985), 41--43. 

\bibitem[Bel]{Bel}
F.~A.~Belgun, {\it On the metric structure of non-K\"ahler complex surfaces}, Math.~Ann.~{\bf 317} (2000), 1--40.

\bibitem[BLY]{BLY}
D.~E.~Blair, G.~D.~Ludden, K.~Yano,  \textit{Geometry of complex manifolds similar to the Calabi-Eckmann manifolds},  J. Differential Geometry  \textbf{9}  (1974), 263--274.

\bibitem[Bru]{Bru}
M.~Brunella, {\it Locally conformally K\"ahler metrics on Kato surfaces}, Preprint arXiv:1001.0530v1 [math.CV] 4 Jan 2010. 

\bibitem[Buc]{Buch}
N.~Buchdahl, {\it On compact K\"ahler surfaces}, Ann.~Inst.~Fourier {\bf 49} (1999), 287--302.

\bibitem[DO]{DO} 
S.~Dragomir, L.~Ornea, {\it Locally conformal K\"ahler geometry}, Birkh\"auser Verlag Boston 1998.

\bibitem[FV]{FV}
S.~Friedl, S.~Vidussi, {\it Twisted Alexander polynomials detect fibered $3$-manifolds}, Preprint arXiv:0805.1234v3 [math.GT] 21 May 2010.

\bibitem[GGM]{G}
I.~Garc\'ia-Ram\'irez, E.~Garc\'ia-R\'io,Y.~Matsushita, {\it Application of Bochner--Weizenbock formulas to symplectic and complex pairs to be K\"ahler pairs in dimension four}, 
Topics in almost Hermitian geometry and related fields,  74--86, World Sci. Publ., Hackensack, NJ, 2005.

\bibitem[Gom]{Go}
R.~E.~Gompf, {\it A new construction of symplectic manifolds}, 
Ann.~Math.~{\bf 142} (1995), 527--595.


\bibitem[KK]{KK}
G.~Kokarev, D.~Kotschick, {\it Fibrations and fundamental groups of K\"ahler--Weyl manifolds}, Proc.~Amer.~Math.~Soc.~{\bf 138} (2010), 997--1010.

\bibitem[K1]{K1}
D.~Kotschick, {\it Orientation-reversing homeomorphisms in surface geography}, Math.~Ann.~{\bf 292} (1992), 375--381.

\bibitem[K2]{Ko}
D.~Kotschick, {\it Orientations and geometrisations of compact complex
surfaces}, Bull.~London Math.~Soc.~{\bf 29} (1997), 145--149.

\bibitem[Lee]{Lee}
H.-C.~Lee, {\it A kind of even-dimensional differential geometry and its applications to exterior calculus}, 
Amer.~J.~Math.~{\bf 65} (1943), 433--438.

\bibitem[Nak]{Nak}
I.~Nakamura, {\it Towards classification of non-K\"ahlerian complex surfaces}, Sugaku Exp.~{\bf 2} (1989), 209--229.

\bibitem[Orn]{ornea} 
L.~Ornea, {\it Locally conformal K\"ahler manifolds. A selection of results}, Lect.~Notes
  Semin.~Interdiscip.~Mat., IV, 121--152, S.I.M. Dep. Mat. Univ. Basilicata, Potenza, 2005. 

\bibitem[OV1]{OV1} 
L.~Ornea, M.~Verbitsky, {\it Structure theorem for compact Vaisman manifolds}, Math.~Research Letters {\bf 10} (2003), 799--805.

\bibitem[OV2]{OV2}
L.~Ornea, M.~Verbitsky, {\it A report on locally conformally K\"ahler manifolds}, Preprint arXiv:1002.3473v1 [math.DG] 18 Feb 2010.

\bibitem[Tau]{Taubes} 
C.~H.~Taubes, {\em The existence of anti-self-dual conformal structures}, J.~Differential Geometry {\bf 36} (1992), 163--253. 

\bibitem[Tel]{Tel}
A.~Teleman, {\it Gauge theoretical methods in the classification of non-K\"ahlerian surfaces}, Preprint arXiv:0804.0557v1 [math.CV] 3 Apr 2008.

\bibitem[V1]{V0}
I.~Vaisman, {\it On locally conformal almost K\"ahler manifolds}, Israel ~J.~Math.~{\bf 24} (1976), 338--351.

\bibitem[V2]{V}
I.~Vaisman, {\it On locally and globally conformal K\"ahler manifolds}, Trans.~Amer.~Math. Soc.~{\bf 262} (1980), 533--542.

\bibitem[V3]{lcs}
I.~Vaisman, {\it Locally conformal symplectic manifolds}, Internat.~J.~Math.~Math.~Sci.~{\bf 8} (1985), 521--536.

\bibitem[W1]{W1}
C.~T.~C.~Wall, {\it Geometries and geometric structures in real dimension $4$ and complex dimension $2$}.  
in {\sl Geometry and topology} (College Park, Md., 1983/84),  268--292, Lecture Notes in Math., 1167, Springer, Berlin, 1985.

\bibitem[W2]{W2}
C.~T.~C.~Wall, {\it Geometric structures on compact complex analytic surfaces},
Topology {\bf 25} (1986), 119--153.

\end{thebibliography}

\end{document}